\numberwithin{equation}{section}
\theoremstyle{plain}
 \newtheorem{theorem}{Theorem}[section]
 \newtheorem{lemma}[theorem]{Lemma}
 \newtheorem{corollary}[theorem]{Corollary}
 \newtheorem{proposition}[theorem]{Proposition}
\theoremstyle{definition}
 \newtheorem{definition}[theorem]{Definition}
 \newtheorem{example}[theorem]{Example}
 \newtheorem{remark}[theorem]{Remark}
\newcommand{\Real}{\mathrm{Re}\,}
\newcommand{\C}{\mathbb{C}}
\newcommand{\R}{\mathbb{R}}
\newcommand{\N}{\mathbb{N}}
\newcommand{\Z}{\mathbb{Z}}
\newcommand{\bN}{\mathbb{N}}
\newcommand{\bZ}{\mathbb{Z}}
\newcommand{\bR}{\mathbb{R}}
\newcommand{\bE}{\mathbb{E}}
\begin{document}

\vspace{5mm}
\begin{center}
{\bf
{\large

Almost periodic stationary processes}}

\vspace{5mm}

David Berger and Farid Mohamed\\
\end{center}

\vspace{5mm}
We derive a necessary and sufficient condition for stochastic processes to have almost periodic finite dimensional distributions; in particular, we obtain characterizations for infinitely divisible processes to be almost periodic in terms of their characteristic triplets. Furthermore, we derive conditions when the process $(X_t)_{t\in\R}$ defined by the stochastic integral $X_t:= \int_{\R^d} f(t,s) dL(s)$ is almost periodic stationary and also when it is almost periodic in probability, where $f(t,\cdot)\in L^1(\R^d,\R)\cap L^2(\R^d,\R)$ is deterministic and $L$ is a L\'evy basis. Moreover, we discuss almost periodic Ornstein-Uhlenbeck-type processes, and obtain a central limit theorem for $m$-dependent processes with almost periodic finite dimensional distributions. 


\section{Introduction}
The study of almost periodic functions was originally motivated by considering two periodic functions with different periods. The sum of these two functions is not necessarily periodic any more but it is still almost periodic. Bohr developed 1925 in [\ref{Bohr}] the theory of almost periodic functions and also showed a Fourier series representation for almost periodic functions which differs from the periodic case as the summation is over periodic functions with different periods. Ever since the introduction by Bohr, almost periodic functions found wide applications in dynamical systems, representation theory, differential equations, and in various other branches of mathematics (see e.g. [\ref{Bezandry}], [\ref{Corduneanu}], [\ref{NGue}], [\ref{Neumann}]).\\
In the context of stochastic processes Gladyshev defined in [\ref{Gladyshev}] stochastic processes with almost periodic mean and covariance functions.\,Further concepts are for example almost periodicity in  distribution, in probability, in quadratic mean, almost sure, etc.\,In [\ref{Bedouhene}] and [\ref{Tudor1}] those different types of almost periodicity were compared and afterwards applied in order to obtain almost periodic solutions of stochastic differential equations.\,In particular, Morozan and Tudor gave in [\ref{Morozan}] a definition for almost periodic mappings with values in the space of probability measures.\,Furthermore,  in [\ref{Tudor1}] almost periodicity in distribution was discussed, which assumes the almost periodicity of the finite dimensional distributions of the process.\,Let us note that we take up the concept of almost periodic processes from [\ref{Morozan}], [\ref{Tudor1}], where we use different distance functions which metrize the topology of weak convergence on the set of probability measures.\,We call such processes almost periodic stationary.\,The goal of this paper is to find a characterization of stochastic processes $(X_t)_{t\in\R}$ to be almost periodic in distribution, especially for infinitely divisible processes via their characteristic function.\,As a concrete example we derive conditions for which the process defined by the stochastic integral (in the sense of Rajput and Rosinski)
\begin{align}
    X_t = \int\limits_{\R^d} f(t,s)dL(s)\label{stochasticIntegral1}
\end{align}
is almost periodic in distribution; moreover, we derive a sufficient condition for such processes to be almost periodic in probability (see [\ref{Bedouhene}], [\ref{Precupanu}]), where $f:\R\times\R^d\to\R$ is a deterministic function and $L$ a so-called L\'evy basis.
Further, we consider Ornstein-Uhlenbeck-type processes whose finite dimensional distributions are almost periodic and show the existence and uniqueness of such processes.\,Moreover, we derive a central limit theorem for $m$-dependent almost periodic stationary processes.\\
The paper is organized as follows. 
\begin{itemize}
    \item In Section 2 we define almost periodic stationary processes and state general results for such processes. Then, we characterize the almost periodic stationarity of stochastic processes via their characteristic function (see Theorem \ref{thechar}). 
    \item This leads us in Section 3 to our main Theorem \ref{theoremID}, which characterizes the almost periodic stationarity of infinitely divisible processes in terms of their characteristic triplets.
    \item As an application, we discuss in Section 4 the almost periodic stationarity of stochastic processes of the form (\ref{stochasticIntegral1}), see Corollary \ref{a10}. 
    \item In Section 5 we derive sufficient conditions for (\ref{stochasticIntegral1}) to be almost periodic in probability (see Theorem \ref{a1}) and consider some examples.
    \item Then, we apply our results in Section\,6 to obtain the unique almost periodic stationary solution of the Ornstein-Uhlenbeck equation (see Theorem \ref{OrnTheorem1}).
    \item Finally, in Section 7 we state the central limit theorem for $m$-dependent almost periodic stationary processes (see Theorem \ref{mcentral}).
\end{itemize}
For notation, we denote by $\mathcal{M}(\R^n)$ the collection of all finite measures on the measurable space $(\R^n,\mathcal{B}(\R^n))$ and by $\mathcal{P}(\R^n)$ the collection of all probability measures on $(\R^n,\mathcal{B}(\R^n))$, where $\mathcal{B}(\R^n)$ denotes the Borel-$\sigma$-algebra on $\R^n$. 

\section{Almost periodic stationary Processes}
The definition of an almost periodic function was first given by Bohr [\ref{Bohr}], where he introduced almost periodicity of functions which map to the real numbers. There exist several extensions by assuming that the range of the function is contained in a Banach or Hilbertspace, or even more general, in a complete separable metric space. As we are interested in functions which map to the space of measures on a complete metric space, we need the definition of almost periodic functions in such spaces, which can be found e.g. in [\ref{Morozan}], [\ref{Tudor1}].

\begin{definition}\label{DefiAPD}
Let $(M,d)$ be a complete metric space and $f:\bR\to M$ be a continuous function. We say that $f$ is \emph{almost periodic} if for every $\varepsilon>0$ there exists an $L_\varepsilon>0$ and a $\tau=\tau(a,\varepsilon)\in [a,a+L_{\varepsilon}]$ for all $a\in\bR$ such that
\begin{align}
    d(f(x+\tau),f(x))<\varepsilon\textrm{ for all }x\in\bR.\label{DefinitionAPD}
\end{align}
\end{definition}

\begin{remark}\label{Rem <3}
In [\ref{Morozan}], [\ref{Tudor1}] it is additionally assumed that $(M,d)$ is separable. It is however easy to see that for the following considerations, separability is not needed to be required. This can be seen by introducing the set $N$ as the closure of the range of the continuous function $f:\R\to M$ and restricting the metric $d$ to $N\times N$. Then $N$ is a separable complete metric space, and clearly (\ref{DefinitionAPD}) is not affected if $f$ is considered as a function to $M$ or to $N$, respectively. The same is true for the equivalent characterizations of almost periodicity by Bochner given below. 
\end{remark}

The definition of Bohr seems to imply that the metric is important for the definition of almost periodicity, but it turns out that only the generated topology is relevant.\,There exist at least two equivalent definitions, which do not include the notion of a metric, but of a topology. As a consequence, we will see that we can change the metric as long as the different metrics generate the same topology.\,The first equivalent definition of almost periodicity was given in [\ref{Bochner}], which states the following. Let $f:\bR \to M$ be a continuous function.\,Then $f$ is almost periodic if and only if the set $(f(\cdot+\tau))_{\tau\in\bR}$ is relatively compact in the space $C(\bR,M)$.\,(This property is often referred to as almost periodicity in the sense of Bochner and the equivalence with the previous definition stems from the fact that we consider functions with domain $\R$, see [\ref{Bedouhene}, p.324]). The topology on $C(\R,M)$ is the topology of uniform convergence on compact subsets of $\R$, which can be shown to be independent of the underlying metric on $M$, see e.g. [\ref{Engelking}, Theorem 8.2.6].\,As a consequence of this result, Bochner obtained another, very useful characterization of almost periodicity, the so-called \emph{Bochner's double sequence criterion}, which states the following.\,A continuous function $f:\bR\to M$ is almost periodic if and only if for every two sequences $(a_n)_{n\in\bN}$ and $(b_n)_{n\in \bN}$ there exist two subsequences $(a'_n)_{n\in\bN}$ and $(b'_n)_{n\in\bN}$ such that the limits $\lim_{n\to\infty}\lim_{m\to\infty}f(x+a'_n+b_m')$ and $\lim_{n\to\infty}f(x+a'_n+b'_n)$ exist for every $x\in\bR$ and are equal.\,For references, see [\ref{Bedouhene}], [\ref{Bochner}], [\ref{Tudor1}] and [\ref{Neumann}].

In [\ref{Morozan}] Morozan and Tudor defined stochastic processes $(X_t)_{t\in\R}$ with \emph{almost periodic finite dimensional distributions}.\,We adopt this definition, whereas we call such processes almost periodic stationary instead.\,An almost periodic stationary process is by definition a process such that the shift operator functions on the finite dimensional distributions are almost periodic in the suitable space.\\


\begin{definition}\label{def1}
Let $X=(X_t)_{t\in\bR}$ be a real-valued stochastic process and define for $n\in\N$ and $(x_1,\dotso,x_n)\in\R^n$ the function $\mu_{x_1,\dotso,x_n}:\R\to\mathcal{P}(\R^n)$, $\mu_{x_1,\dotso,x_n}(t):=\mathcal{L}(X_{x_1+t},\dotso,X_{x_n+t})$, where $\mathcal{L}(X_{x_1},\dotso,X_{x_n})$ denotes the $n$-dimensional distribution of $X$ in $(x_1,\dotso,x_n)\in\R^n$. We call $X$ \emph{almost periodic stationary} if for any $n\in\N$ and $(x_1,\dotso,x_n)\in \R^n$ the function $t\mapsto \mu_{x_1,\dotso,x_n}(t)$ is almost periodic in $t\in\R$.
\end{definition}

%




We did not mention the metric used on the space of probability measures, but we always assume that the metric is complete and induces the topology of weak convergence on $\mathcal{P}(\R^n)$.\,The following metrics satisfy this assumption and are useful for different applications.\\
Since we will need the Prokhorov metric later on also for finite measures rather than only probability measures, we give it immediately in this context. The other metrics are only used on the space of probability measures. \newpage
\begin{definition}\label{defnewmet}\, 
\begin{itemize}
    \item[a)] The \emph{Prokhorov metric} $\delta_n:\mathcal{M}(\R^n)\times \mathcal{M}(\R^n)\to [0,\infty)$ is defined as
\begin{align*}
    \delta_n(\mu,\nu) := \inf\{ \varepsilon>0 : \mu(A)\le \nu(A^{\varepsilon}) + \varepsilon \textrm{ and } \nu(A)\le \mu(A^{\varepsilon})+\varepsilon \textrm{ for all } A\in \mathcal{B}(\R^n) \},
\end{align*}
where $A^{\varepsilon} = \bigcup\limits_{x\in A} \{y\in\R^n: |y-x|<\varepsilon\}$.
    \item[b)] The \emph{bounded-Lipschitz metric} $\beta_n:\mathcal{P}(\R^n)\times \mathcal{P}(\R^n)\to [0,\infty)$ is defined as
\begin{align*}
    \beta_n(\mu,\nu) := \sup\limits_{\|f\|_{BL}\le 1} \bigg\{  \bigg| \int_{\R^n}\ f \, d(\mu-\nu) \bigg| \bigg\}, 
\end{align*}
where $$\|f\|_{BL} := \sup\limits_{x\neq y} \frac{|f(x)-f(y)|}{|x-y|} +  \sup\limits_{x\in\R^n} |f(x)|,$$ for $f:\R^n\to\R$.
    \item[c)] We define $\gamma_n:\mathcal{P}(\R^n)\times\mathcal{P}(\R^n)\to [0,\infty)$ as
\begin{align*}
    \gamma_n(\mu,\nu):= \sum\limits_{k=1}^{\infty} 2^{-k} \| \hat{\mu}-\hat{\nu} \|_{C([-k,k]^n)},
\end{align*}
where $\hat{\mu},\hat{\nu}$ are the characteristic functions of $\mu$ and $\nu$ respectively, and $C([-k,k]^n)$, $k\in\N$, is the space of continuous functions restricted to $[-k,k]^n$ endowed with the uniform norm.
\end{itemize}
\end{definition}

\begin{remark}\label{nMetric}
The metrics in a) and b) are covered extensively in [\ref{Dudley}]. The metric defined in c) is indeed a complete metric which induces the topology of weak convergence on $\mathcal{P}(\R^n)$, and it is separable.\, The proof relies on a standard argument, but as we did not find a proof of this result, we give a short proof here.
\end{remark}

\begin{proof}[Proof of Remark \ref{nMetric}]
Clearly, $\gamma_n$ is a metric on $\mathcal{P}(\R^n)$. Furthermore, we observe
\begin{align*}
    \gamma_n(\mu,\nu) &= \sum\limits_{k=1}^{\infty} 2^{-k} (1+\sqrt{n} k)\bigg\|\frac{ \hat{\mu}-\hat{\nu}}{1+\sqrt{n}k} \bigg\|_{C([-k,k]^n)}\\
    &\le  \sum\limits_{k=1}^{\infty} 2^{-k} (1+\sqrt{n}k) \cdot 2\beta_n(\mu,\nu)\\
    &= (2+4\sqrt{n})\,\beta_n(\mu,\nu).
\end{align*}
So if there is a sequence $\mu_l$, $l\in\N$, and $\mu$, all in $\mathcal{P}(\bR^n)$ such that $\mu_l$ converges weakly to $\mu$ as $l\to\infty$, we see that $\lim_{l\to\infty}\gamma_n(\mu_l,\mu)=0$.\\
Now take a sequence $(\mu_l)_{l\in\bN}\subset\mathcal{P}(\bR^n)$ such that for every $\varepsilon>0$ there exists an $m_0$ such that for all $l,m>m_0$ we have $\gamma_n(\mu_l,\mu_m)<\varepsilon$.\,We conclude that $(\hat{\mu}_l)_{l\in\bN}$ is a Cauchy sequence in every space $(C([-k,k]^n),\|\cdot\|_{C([-k,k]^n)})$.\,Hence, there exists a bounded function $\hat{\mu} \in C(\bR)$ such that $\|\hat{\mu_l}-\hat{\mu}\|_{C([-k,k])}\to 0$ as $l\to\infty$ for every $k\in\N$.\,By Levy's continuity theorem we know that $\hat{\mu}$ is the characteristic function of a probability measure $\mu$ and the sequence $\mu_l$ converges weakly to $\mu$, which implies that the limit exists in $\gamma_n$.\,This implies that the metric is indeed metricizing the weak topology on $\mathcal{P}(\bR^n)$ and it is complete. Finally, since separability of a metric depends only on its induced topology and since $\delta_n$ is separable, so is $\gamma_n$.
\end{proof}

We directly see that strictly stationary processes and periodic strictly stationary processes indeed satisfy the condition in Definition \ref{def1}.\,Furthermore, there exists a definition of almost periodically correlated processes, i.e.~a process such that its mean and covariance functions are almost periodic (see [\ref{Gladyshev}]).

\begin{definition}
A real-valued stochastic process $X=(X_t)_{t\in\R}$ with finite second moment, i.e. $\mathbb{E}X_t^2<\infty$ for all $t\in\bR$, is called \emph{almost periodically correlated} if its mean and covariance functions
\begin{align*}
    m(t) &:= \mathbb{E} X_t \quad \textrm{and}\\
    C_a(t) &:= \mathbb{E}X_t X_{t+a} - \mathbb{E}X_t \mathbb{E}X_{t+a}
\end{align*}
are almost periodic functions in $t\in\R$ for all $a\in\R$.
\end{definition}

In [\ref{Bedouhene}] and [\ref{Tudor1}] different concepts of almost periodic stochastic processes were discussed and compared. In the following we continue this line of research by showing that our definition implies almost periodic correlation if the process $X=(X_t)_{t\in\R^d}$ is $L^2$-uniformly integrable. In order to do so, we briefly discuss almost periodicity in the well-known Wasserstein metric and then mention further properties of almost periodic stationary processes.\,Let $p\in [1,\infty)$. Recall that a stochastic process $(X_t)_{t\in\R}$ is \emph{$L^p$-uniformly integrable} if to each $\varepsilon>0$ there exists a $k>0$ such that $\sup\limits_{t\in\R} \mathbb{E}|X_t|^p\mathds{1}_{|X_t|>k} < \varepsilon$.



\begin{remark}
For any two probability measures $\mu,\nu\in \mathcal{P}(\R^n)$ with finite $p$-moment ($p\in [1,\infty)$), the $p$-th \emph{Wasserstein metric} is defined as
\begin{align*}
    W_p(\mu,\nu) : = \left( \inf\limits_{\sigma \in\Gamma(\mu,\nu)} \int\limits_{\R^n\times\R^n} |x-y|^p  \,d\sigma(x,y) \right)^{1/p},
\end{align*}
where $\Gamma(\mu,\nu)$ denotes the set of all measures on $\R^n\times\R^n$ with marginals $\mu$ and $\nu$. Given a sequence $(\mu_n)_{n\in\N}$ of probability measures with finite $p$-th moment and a probability measure $\mu$ with finite $p$-th moment, we know that $W_p(\mu_n,\mu)\to 0$ for $n\to\infty$ is equivalent to $(\mu_n)_{n\in\N}$ being $L^p$-uniformly integrable and weakly convergent to $\mu$ (see [\ref{Villani}, Theorem\,7.12]).\,We observe that if a process $X=(X_t)_{t\in\R}$ is $L^p$-uniformly integrable and almost periodic stationary with respect to one of the metrics from Definition \ref{defnewmet}, then $X$ is also almost periodic stationary with respect to the metric $W_p$, which can be seen from Bochner's double sequence criterion.\,Let us now summarize some results on almost periodic stationary processes.
\end{remark}

\begin{proposition}\label{apsProp}
Let $X=(X_t)_{t\in\bR}$ be an almost periodic stationary process. Then the following are true:
\begin{itemize}
    \item[a)] For any $n\in\N$ and $(x_1,\dotso,x_n)\in \R^n$, the function $\mu_{x_1,\dotso,x_n}(t)$ is uniformly continuous in $t\in\R$.
    \item[b)] The set of probability measures $(\mathcal{L}_{X_t})_{t\in\R} \subset \mathcal{P}(\R)$ is relatively compact.
    \item[c)] Let $g:\R\to \mathbb{R}$ be a continuous function.\,Then $g(X_t)$ is an almost periodic stationary process.
    \item[d)] If $X$ is $L^2$-uniformly integrable, then $X$ is almost periodically correlated. 
\end{itemize}
\end{proposition}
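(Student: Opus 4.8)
The four assertions all follow from standard properties of $M$-valued almost periodic functions, applied to the functions $t\mapsto\mu_{x_1,\dots,x_n}(t)$, which are almost periodic by hypothesis. I will use three such properties. \emph{(F1)} Every almost periodic $f\colon\R\to M$ is uniformly continuous: given $\ep>0$, take $L_\ep$ as in Definition \ref{DefiAPD}, let $\dl\in(0,1]$ be a modulus of uniform continuity of $f$ on the compact interval $[0,L_\ep+1]$, and for $|s-t|<\dl$ pick (with $a=-s$) a $\ta\in[-s,-s+L_\ep]$ with $d(f(x+\ta),f(x))<\ep$ for all $x$; then $s+\ta,t+\ta\in[0,L_\ep+1]$, and the triangle inequality gives $d(f(s),f(t))<3\ep$. \emph{(F2)} The range $f(\R)$ is relatively compact: $f([0,L_\ep])$ is compact, hence covered by finitely many $\ep$-balls, and every $f(t)$ lies within $\ep$ of this image for a suitable translation number, so $f(\R)$ is totally bounded, hence relatively compact since $M$ is complete. \emph{(F3)} If $f\colon\R\to M$ is almost periodic and $\ph\colon M\to N$ is continuous, then $\ph\circ f$ is almost periodic: $\ph$ is uniformly continuous on the compact set $\overline{f(\R)}$, so for each $\ep>0$ there is $\ep'>0$ such that every $\ep'$-translation number of $f$ is an $\ep$-translation number of $\ph\circ f$, and $\ph\circ f$ is continuous. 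Then a) is (F1) for $f=\mu_{x_1,\dots,x_n}$, and b) is (F2) for $t\mapsto\mu_0(t)=\law(X_t)$, whose range is $(\law_{X_t})_{t\in\R}$ and whose codomain $\cP(\R)$ is complete for any of the metrics of Definition \ref{defnewmet}.

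For c), fix $n$ and $(x_1,\dots,x_n)$ and set $G\colon\R^n\to\R^n$, $G(y_1,\dots,y_n)=(g(y_1),\dots,g(y_n))$; this is continuous, so the pushforward $G_*\colon\cP(\R^n)\to\cP(\R^n)$, $G_*\mu=\mu\circ G^{-1}$, is continuous for the weak topology, since $\int h\,\di(G_*\mu)=\int(h\circ G)\,\di\mu$ and $h\circ G$ is bounded continuous whenever $h$ is. As $\law(g(X_{x_1+t}),\dots,g(X_{x_n+t}))=G_*\bigl(\mu_{x_1,\dots,x_n}(t)\bigr)$, the finite dimensional distribution function of $(g(X_t))_{t\in\R}$ at $(x_1,\dots,x_n)$ equals $G_*\circ\mu_{x_1,\dots,x_n}$, which is almost periodic by (F3). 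Hence $(g(X_t))_{t\in\R}$ is almost periodic stationary.

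For d), first apply the Remark preceding Proposition \ref{apsProp}: since $X$ is $L^2$-uniformly integrable and almost periodic stationary, it is almost periodic stationary for the Wasserstein metric $W_2$; in particular $t\mapsto\mu_0(t)=\law(X_t)$ and, for every $a\in\R$, $t\mapsto\mu_{0,a}(t)=\law(X_t,X_{t+a})$ are $W_2$-almost periodic. Note that $L^2$-uniform integrability forces $\sup_t\bE X_t^2<\infty$, so $m$ and $C_a$ are well defined. The functionals $\cP_2(\R)\ni\mu\mapsto\int x\,\di\mu$ and $\cP_2(\R^2)\ni\mu\mapsto\int x_1x_2\,\di\mu$ are $W_2$-continuous: for an optimal coupling $\sg$ of $\mu,\nu$,
\[ \Bigl|\int x\,\di(\mu-\nu)\Bigr|\le\int|x-y|\,\di\sg\le W_2(\mu,\nu),\qquad \Bigl|\int x_1x_2\,\di(\mu-\nu)\Bigr|\le W_2(\mu,\nu)\Bigl(\int(|x|+|y|)^2\,\di\sg\Bigr)^{1/2}, \]
and the last factor stays bounded along any $W_2$-convergent sequence because second moments then converge. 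Therefore
\[ m(t)=\int x\,\di\mu_0(t),\qquad C_a(t)=\int x_1x_2\,\di\mu_{0,a}(t)-\Bigl(\int x_1\,\di\mu_{0,a}(t)\Bigr)\Bigl(\int x_2\,\di\mu_{0,a}(t)\Bigr) \]
exhibit $m$ and $C_a$ as compositions of the $W_2$-almost periodic maps $\mu_0(\cdot)$ and $\mu_{0,a}(\cdot)$ with continuous functionals on $\cP_2(\R)$, resp.\ $\cP_2(\R^2)$; by (F3) they are almost periodic.

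The only nonroutine ingredients are (F3) --- where relative compactness of the range of an almost periodic function is precisely what allows composition with a merely continuous (not uniformly continuous) map --- and, in d), the $W_2$-continuity of the second-moment and covariance functionals recorded in the estimate above (together with the fact, implicit in the cited Remark, that $(X_t,X_{t+a})_{t\in\R}$ inherits $L^2$-uniform integrability from $X$); the remaining verifications are bookkeeping.
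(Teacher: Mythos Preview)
Your proof is correct. The main difference from the paper's argument is one of style: you work throughout with Bohr's definition via $\ep$-translation numbers and the three elementary facts (F1)--(F3), whereas the paper handles b), c), d) through Bochner's double-sequence criterion (and simply cites Levitan for a)). In c) the paper combines Bochner's criterion with the continuous mapping theorem; your route via the pushforward $G_*$ and (F3) is the same continuous-mapping idea, just phrased for the translation-number definition. In d) both arguments pass through the Wasserstein metric $W_2$ using $L^2$-uniform integrability; the paper checks Bochner's criterion for the moment functions directly, while you factor $m$ and $C_a$ as continuous functionals on $(\cP_2,W_2)$ composed with the $W_2$-almost periodic finite-dimensional laws and then invoke (F3). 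Your approach is slightly more self-contained (you actually prove (F1)--(F3)) and more uniform across the four parts; the paper's Bochner-based arguments are terser once that criterion is taken for granted. One cosmetic point in your (F1): to guarantee that both $s+\ta$ and $t+\ta$ land in the chosen compact interval you should either assume without loss of generality that $s\le t$ or take the interval to be $[-1,L_\ep+1]$; the argument is unaffected.
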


\begin{proof}
Statement a) follows from [\ref{Levitan}, Property 2, p.2], and b) follows from Bochner's double sequence criterion, since by choosing $x=0$, $(b_n)_{n\in\N}=0$ we find for a general sequence $(a_n)_{n\in\N}$ a subsequence $(a_n')_{n\in\N}$ such that $(\mathcal{L}(X_{a_n'}))_{n\in\N}$ converges weakly, showing relative compactness of $(\mathcal{L}(X_t))_{t\in\R}$.\\ 
c) This is a direct consequence of Bochner's double sequence criterion and the continuous mapping theorem, see [\ref{Dudley}, Theorem 9.3.7].\\
d) We use Bochner's double sequence criterion.\,At first we know that the limits
\begin{align*}
    \lim_{n\to\infty}\lim_{m\to\infty}\mu_{t+a_{n}+b_m,s+a_{n}+b_m}=\lim_{n\to\infty}\mu_{t+a_{n}+b_n,s+a_{n}+b_n}
\end{align*}
exist in the topology of the weak convergence. By our uniform integrability condition, we conclude that the limits also exist in the Wasserstein space $W_2$. As a consequence we know that the limits $$\lim_{n\to\infty}\lim_{m\to\infty}\bE X_{t+a_n+b_m}X_{s+a_n+b_m}=\lim_{n\to\infty}\bE X_{t+a_n+b_n}X_{s+a_n+b_n}$$ exist.\,By definition it follows that $\bE X_{t+u}X_{s+u}$ is almost periodic in $u$ for every $t,s\in\bR$ (that $u\mapsto \mathbb{E}X_{t+u}X_{s+u}$ is continuous follows similarly from the continuity of $u\mapsto \mu_u(s,t)$ with respect to the Wasserstein metric $W_2$).\,Similarly, $u\mapsto \mathbb{E} X_{t+u}$ is almost periodic for every $t\in\R$. Since sums and products of almost periodic functions are again almost periodic, the claim follows.
\end{proof}

With the metric $\gamma_n$ from Definition \ref{defnewmet} we can easily deduce a simple equivalent condition based on characteristic functions when a stochastic process is almost periodic stationary:

\begin{theorem}\label{thechar}
Let $(X_t)_{t\in\bR}$ be a stochastic process. The process is almost periodic stationary if, and only if, for every $n\in\bN$, $t_1,\dotso,t_n\in\R$, $K\subset \bR^n$ compact and $\varepsilon>0$ there exists an $l>0$ such that for every $a\in\bR$ there exists a $\tau\in [a,a+l]$ with
\begin{align}
    \sup_{x\in\R} \sup_{z\in K}|\hat{\mathcal{L}}(X_{t_1+x},\dotso,X_{t_n+x})(z)-\widehat{\mathcal{L}}(X_{t_1+x+\tau},\dotso,X_{t_n+x+\tau})(z)|<\varepsilon\label{t1}
\end{align}
and the functions
\begin{align}
    \bR\ni x\mapsto \widehat{\mathcal{L}}(X_{t_1+x},\dotso,X_{t_n+x})(z)\label{t2}
\end{align}
are continuous for every fixed $z$ and every vector $(t_1,\dotso,t_n)\in\bR^n$.
\end{theorem}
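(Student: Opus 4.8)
The plan is to work throughout with the metric $\gamma_n$ from Definition \ref{defnewmet}. Since almost periodicity of a function into a metric space depends only on the induced topology (as recalled above via Bochner's criteria), and since $\gamma_n$ metrizes weak convergence on $\cP(\bR^n)$ by Remark \ref{nMetric}, the process $X$ is almost periodic stationary if and only if, for every $n\in\bN$ and every $(t_1,\dotso,t_n)\in\bR^n$, the map
\[
 g\colon\bR\to(\cP(\bR^n),\gamma_n),\qquad g(x):=\mathcal{L}(X_{t_1+x},\dotso,X_{t_n+x}),
\]
is almost periodic in the sense of Definition \ref{DefiAPD}. By that definition this means: $g$ is continuous, and $g$ has the Bohr translation property. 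I would establish the two equivalences separately, namely that \eqref{t1} is equivalent to the translation property of $g$, and that \eqref{t2} is equivalent to continuity of $g$; the key mechanism for the first is that the geometric weights in $\gamma_n$ make $\gamma_n(\mu,\nu)$ comparable, on any cube $[-k_0,k_0]^n$, to $\sup_{z\in[-k_0,k_0]^n}|\hat\mu(z)-\hat\nu(z)|$, up to an explicitly controllable tail.

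For the translation property versus \eqref{t1}: in one direction, given a compact $K\subset\bR^n$ and $\varepsilon>0$, I would pick $k_0$ with $K\subset[-k_0,k_0]^n$, so that for all $x,\tau$,
\[
 2^{-k_0}\sup_{z\in K}\bigl|\hat g(x)(z)-\hat g(x+\tau)(z)\bigr|\ \le\ \gamma_n\bigl(g(x),g(x+\tau)\bigr),
\]
and then apply the translation property of $g$ with tolerance $2^{-k_0}\varepsilon$ to obtain the required $l>0$. Conversely, given $\varepsilon>0$, I would choose $k_0$ so large that $2^{1-k_0}<\varepsilon/2$, apply \eqref{t1} with the cube $K=[-k_0,k_0]^n$ and tolerance $\varepsilon/2$, and then estimate
\[
 \gamma_n\bigl(g(x),g(x+\tau)\bigr)=\sum_{k=1}^{k_0}2^{-k}\bigl\|\hat g(x)-\hat g(x+\tau)\bigr\|_{C([-k,k]^n)}+\sum_{k>k_0}2^{-k}\bigl\|\hat g(x)-\hat g(x+\tau)\bigr\|_{C([-k,k]^n)}.
\]
The tail is at most $2^{1-k_0}<\varepsilon/2$ because characteristic functions are bounded by $1$; the head is $<\tfrac{\varepsilon}{2}\sum_{k\ge1}2^{-k}=\varepsilon/2$ by the choice of $\tau$, since each norm there is over a subcube of $[-k_0,k_0]^n$. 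Hence $\gamma_n(g(x),g(x+\tau))<\varepsilon$ uniformly in $x$, giving the translation property.

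For continuity versus \eqref{t2}: if $g$ is $\gamma_n$-continuous, then $x_j\to x$ forces $g(x_j)\to g(x)$ weakly, hence $\hat g(x_j)(z)\to\hat g(x)(z)$ for every fixed $z$ by testing against the bounded continuous function $y\mapsto e^{\ri\la z,y\ra}$, which is \eqref{t2}. Conversely, assuming \eqref{t2}, for $x_j\to x$ we get $\hat g(x_j)(z)\to\hat g(x)(z)$ pointwise in $z$, and since the limit is the genuine characteristic function $\hat g(x)$, Lévy's continuity theorem yields $g(x_j)\to g(x)$ weakly, whence $\gamma_n(g(x_j),g(x))\to0$ (e.g. via the bound $\gamma_n\le(2+4\sqrt n)\beta_n$ from the proof of Remark \ref{nMetric} together with $\beta_n$ metrizing weak convergence). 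Combining the two equivalences with the reduction in the first paragraph gives the theorem. I do not anticipate a serious obstacle: the only step that is not pure bookkeeping with the geometric series defining $\gamma_n$ is the passage, in the ``if'' direction, from pointwise continuity of the characteristic functions to continuity into $(\cP(\bR^n),\gamma_n)$, which is exactly where Lévy's continuity theorem is invoked.
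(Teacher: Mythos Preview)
Your proposal is correct and follows essentially the same approach as the paper: both work with the metric $\gamma_n$, reduce compact sets to cubes $[-k_0,k_0]^n$, exploit the geometric tail of the weights $2^{-k}$ to pass between $\gamma_n$ and sup-norms of characteristic functions on cubes, and invoke L\'evy's continuity theorem for the continuity equivalence \eqref{t2}. Your presentation is slightly more explicit in separating the continuity and translation-property equivalences, but the argument is the same.
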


\begin{proof}
First observe that (\ref{t2}) is equivalent to the continuity of $x\mapsto \mathcal{L}(X_{t_1+x},\dotso,X_{t_n+x})$ by L\'evy's continuity theorem.\,The rest is a consequence of Definition \ref{def1} when choosing the metric $\gamma_n$: assume that the process is almost periodic. For every compact set $K\subset \bR^n$ there exists an $m>0$ such that $K\subset [-m,m]^n$. We see that
\begin{align*}
   \sup_{x\in\R}\sup_{z\in K}|\widehat{\mathcal{L}}(X_{t_1+x},\dotso,X_{t_n+x})(z)-\widehat{\mathcal{L}}(X_{t_1+x+\tau},\dotso,X_{t_n+x+\tau})(z)|\\
   \leq 2^m \sup_{x\in\R}\gamma_n(\mathcal{L}(X_{t_1+x},\dotso,X_{t_n+x}),\mathcal{L}(X_{t_1+x+\tau},\dotso,X_{t_n+x+\tau})),
\end{align*}
showing (\ref{t1}).\\
For the converse direction fix $\varepsilon>0$.\,Observe that by choosing $K=[-m,m]^d$ such that $\sum_{k=m+1}^\infty2^{-k}<\varepsilon/4$ we have that
\begin{align*}
    \sum\limits_{k=1}^\infty& 2^{-k} \|\widehat{\mathcal{L}}(X_{t_1+x},\dotso,X_{t_n+x})-\widehat{\mathcal{L}}(X_{t_1+x+\tau},\dotso,X_{t_n+x+\tau})\|_{C([-k,k]^d)}\\
    \le&  \|\widehat{\mathcal{L}}(X_{t_1+x},\dotso,X_{t_n+x})-\widehat{\mathcal{L}}(X_{t_1+x+\tau},\dotso,X_{t_n+x+\tau})\|_{C([-m,m]^d)}+\varepsilon/2
\end{align*}
for every $x\in\R$. For given $t_1,\dotso,t_n\in\R$ we choose $\tau$ such that $$ \|\widehat{\mathcal{L}}(X_{t_1+x},\dotso,X_{t_n+x})-\widehat{\mathcal{L}}(X_{t_1+x+\tau},\dotso,X_{t_n+x+\tau})\|_{C([-m,m]^d)}<\varepsilon/2 \quad\textrm{ for all } x\in\R$$. From this we get
\begin{align*}
    \sup\limits_{x\in\R}\gamma_n(\mathcal{L}(X_{t_1+x},\dotso,X_{t_n+x}),\mathcal{L}(X_{t_1+x+\tau},\dotso,X_{t_n+x+\tau}))<\varepsilon.
\end{align*}
As $\varepsilon$ was arbitrary, this shows that the process is almost periodic stationary.
\end{proof}

From Theorem \ref{thechar} we see that it is indeed possible to characterize almost periodic stationary processes via the characteristic function. 
In many cases it is much easier to calculate the difference between characteristic functions, e.g.\,in the case of two infinitely divisible distributions, which we consider in the following chapter.

\section{Almost periodic stationarity of infinitely divisible random processes}
We derive sufficient and necessary conditions for an infinitely divisible process $(X_t)_{t\in \bR}$ to be almost periodic stationary, but first we recall the definitions of an infinitely divisible distribution and an infinitely divisible process .

A probability measure $\mu\in\mathcal{P}(\bR^n)$ is \emph{infinitely divisible}, if for every $m\in\N$ there exists $\mu_m\in\mathcal{P}(\bR^n)$ such that $\mu=\mu_m^{*m}$ (the $m$-fold convolution of $\mu_m$ with itself). By the \emph{L\'evy-Khintchine formula} (see e.g. [\ref{Kallenberg}], [\ref{Sato}]), $\mu\in \mathcal{P}(\R^n)$ is infinitely divisible if and only if there exist some $\gamma \in\R^n$, a symmetric positive semi-definite matrix $A\in\R^{n\times n}$ and a measure $\nu$ on $\R^n$ with
\begin{align*}
    \int\limits_{\R^n} \min(1,|x|^2)\,\nu(dx)<\infty \textrm{ and } \nu(\{0\})=0
\end{align*}
($\nu$ is called a \emph{L\'{e}vy measure}) such that
\begin{align}
    \hat{\mu}(z) &= \exp(\psi(z))\label{muLK}
    \intertext{with}
    \psi(z) &=i\langle\gamma, z\rangle-\frac12 \langle z, Az\rangle  +\int\limits_{\bR^n} (e^{i\langle z,y\rangle }-1-i\langle z,y\rangle \mathds{1}_{|x|\le 1})\nu(dy),\quad z\in\R^n\label{levykhin}
\end{align}
(here, $\langle \cdot,\cdot\rangle$ denotes the Euclidean inner product in $\R^n$).\,We call $(A,\gamma,\nu)$ the \emph{characteristic triplet} of $\mu$, which is known to be unique, and $\psi$ the \emph{characteristic exponent} of $\mu$. Further, for every $\gamma\in\R^n$, symmetric positive semi-definite $A\in\R^{n\times n}$ and L\'evy measure $\nu$, the right-hand side of (\ref{muLK}), (\ref{levykhin}) defines the characteristic function of an infinitely divisible distribution. Instead of the indicator function $\mathds{1}_{|x|\le 1}$ in (\ref{levykhin}) one can choose a continuous representation function $c:\R^n\to\R$ which is bounded, with compact support and satisfies $c(x)=1$ for $|x|\le 1$ (e.g. see [\ref{Jacod}, Section VII.2a], [\ref{Sato}, Remark 8.4]) so that the integrability with respect to $\nu$ in (\ref{levykhin}) is still assured. This only changes the value of $\gamma$. In this case we write $(A,\gamma^c,\nu)_c$ for the characteristic triplet of $\mu$.
A stochastic process $(X_t)_{t\in \bR}$ is called an \emph{infinitely divisible stochastic process} if all finite-dimensional distributions $(X_{t_1},\dotso,X_{t_n})$, with $n\in \bN$ and $(t_1,\dotso,t_n)\in \bR^n$, are infinitely divisible.\,We write $(A_{t_1,\dotso,t_n},\gamma^c_{t_1,\dotso,t_n}, \nu_{t_1,\dotso,t_n})_c$ and $(A_{t_1,\dotso,t_n},\gamma_{t_1,\dotso,t_n}, \nu_{t_1,\dotso,t_n})$ when $c(x)=\mathds{1}_{|x|\le 1}$ for the characteristic triplet of $(X_{t_1},\dotso,X_{t_n})$, and $\psi_{t_1,\dotso,t_n}$ for its characteristic exponent.


Our next theorem describes how to express almost periodic stationarity for infinitely divisible processes in terms of their characteristic exponents and triplets. For a column vector $x\in\R^d$ we denote by $x^T$ its transpose and for $a,b\in\R$ we denote by $a\wedge b$ the minimum of $a$ and $b$.
\begin{theorem}\label{theoremID}
Let $(X_t)_{t\in\bR}$ be an infinitely divisible process and for each $d\in\N$ let $c_d:\R^d\to\R$ be a continuous function with compact support such that $c_d(x)=1$ for $|x|\le 1$.\,Denote the characteristic exponents and triplets of $(X_{t_1},\dotso,X_{t_d})$ by $\psi_{t_1,\dotso,t_d}$ and $(A_{t_1,\dotso,t_d},\gamma_{t_1,\dotso,t_d}^{c_d},\nu_{t_1,\dotso,t_d})_{c_d}$, respectively.\,Then the following assertions are equivalent:
\begin{itemize}
    \item[a)] The process $(X_t)_{t\in\bR}$ is almost periodic stationary.
    \item[b)] The functions $\R\to C([-k,k]^d,\C)$, $t\mapsto \psi_{{t_1+t,\dotso,t_d+t}\big|_{[-k,k]^d}}$ (the restriction of $\psi_{t_1+t,\dotso,t_d+t}$ to $[-k,k]^d$) are almost periodic for every $k\in\N$, $d\in\bN$ and $(t_1,\dotso,t_d)\in\R^d$.
    \item[c)] The functions 
    \begin{align}
        \label{condition1}&\bR\ni t\mapsto \gamma^{c_d}_{t_1+t,\dotso,t_d+t},\\
        \label{condition2}&\bR\ni t\mapsto A_{t_1+t,\dotso,t_d+t}+\int\limits_{\bR^d} xx^Tc_d(x)^2\nu_{t_1+t,\dotso,t_d+t}(dx),\\
        \label{condition3}&\bR\ni t \mapsto (|x|^3\wedge 1)\,\nu_{t_1+t,\dotso,t_d+t}(dx)
    \end{align}
    are almost periodic in $\bR^d$, $\bR^{d\times d}$ and $\mathcal{M}(\bR^d)$, respectively, for every $d\in\bN$ and $(t_1,\dotso,t_d)\in\R^d$.
\end{itemize}
\end{theorem}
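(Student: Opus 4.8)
The plan is to prove the cycle of implications a) $\Leftrightarrow$ b) and b) $\Leftrightarrow$ c). The equivalence of a) and b) should be essentially a translation of Theorem \ref{thechar} through the L\'evy--Khintchine formula. Indeed, by Theorem \ref{thechar}, almost periodic stationarity is equivalent to the almost periodicity of $t\mapsto \widehat{\mathcal L}(X_{t_1+t},\dots,X_{t_d+t})$ as a map into $C([-k,k]^d,\C)$ for every $k$, together with continuity; since $\widehat{\mathcal L}(X_{t_1+t},\dots,X_{t_d+t}) = \exp(\psi_{t_1+t,\dots,t_d+t})$, I would argue that on the compact set $[-k,k]^d$ the exponential map and its inverse (the principal logarithm, well-defined since characteristic functions of infinitely divisible laws are zero-free and $\psi$ is the continuous branch with $\psi(0)=0$) are uniformly continuous on the relevant bounded sets. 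One must check that the image $\{\psi_{t_1+t,\dots,t_d+t}\big|_{[-k,k]^d} : t\in\R\}$ stays in a bounded (hence relatively compact, after passing to a slightly larger box and using equicontinuity) subset of $C([-k,k]^d,\C)$: boundedness of $\psi$ on $[-k,k]^d$ uniformly in $t$ follows from relative compactness of the family of finite-dimensional laws (Proposition \ref{apsProp}\,b), extended to higher dimensions by the same Bochner argument) since a relatively compact family of infinitely divisible measures has characteristic exponents uniformly bounded on compacta. With this, $\exp$ restricted to that bounded set is bi-Lipschitz onto its image in the appropriate norms, and Bochner's double-sequence criterion transfers almost periodicity back and forth between $\widehat{\mathcal L}$ and $\psi$. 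The continuity clause in Theorem \ref{thechar} matches the continuity built into Definition \ref{DefiAPD} for the map into $C([-k,k]^d,\C)$.

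For b) $\Leftrightarrow$ c), the key is a quantitative form of the continuity of the map (characteristic triplet) $\mapsto$ (characteristic exponent restricted to a compact box), in both directions. The forward direction b) $\Rightarrow$ c) is the standard fact that convergence of characteristic exponents uniformly on compacta of $\R^d$ is equivalent to convergence of the triplet data in the sense that $\gamma^{c_d}_{(\cdot)}$ converges in $\R^d$, the ``Gaussian-plus-small-jumps second moment'' part $A_{(\cdot)} + \int xx^Tc_d(x)^2\,\nu_{(\cdot)}(dx)$ converges in $\R^{d\times d}$, and $(|x|^2\wedge 1)\,\nu_{(\cdot)}(dx)$ converges weakly as finite measures — this is exactly the content of the standard convergence theorem for infinitely divisible laws (e.g. Sato, Theorem 8.7, or Jacod--Shiryaev). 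Since weak convergence of the finite measures $(|x|^2\wedge1)\nu_{(\cdot)}$ is metrized by the Prokhorov metric $\delta_d$, and since the statement uses $(|x|^3\wedge1)\nu$ rather than $(|x|^2\wedge1)\nu$, I would note that the map $\rho \mapsto \frac{|x|^3\wedge1}{|x|^2\wedge1}\cdot\rho$ is a homeomorphism between the corresponding spaces of finite measures (the density is a bounded continuous strictly positive function, equal to $|x|$ near $0$ and to $1$ away from $0$), so almost periodicity of one is equivalent to almost periodicity of the other; the reason for the $|x|^3$ version, presumably to be used in later sections for the stochastic-integral examples, should be remarked. Bochner's double-sequence criterion then lets me upgrade the pointwise convergence statements to the almost periodicity statements: given two sequences $(a_n),(b_n)$, extract subsequences simultaneously for all three objects in c) (using separability/metrizability of the target spaces and a diagonal argument over $k,d$ and the countably many rational $(t_1,\dots,t_d)$, then continuity to pass to all real tuples), and conversely.

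The converse c) $\Rightarrow$ b) requires that joint convergence of the three pieces of triplet data implies uniform-on-compacta convergence of $\psi$; this is again the easy direction of the infinitely divisible convergence theorem — one writes $\psi_{t_1+t,\dots,t_d+t}(z)$ as an integral against $(|x|^2\wedge1)\nu$ of a bounded continuous (in $x$, locally uniformly in $z$) integrand plus the quadratic term from $A + \int xx^Tc_d^2\,\nu$ plus the linear term $i\langle\gamma^{c_d},z\rangle$, and reads off continuity/almost periodicity from that of the pieces. I do not expect real obstacles here beyond bookkeeping. The main obstacle, and the step I would spend the most care on, is the uniform control needed in a) $\Leftrightarrow$ b): establishing that the family $\{\psi_{t_1+t,\dots,t_d+t}\big|_{[-k,k]^d}\}_{t\in\R}$ is relatively compact in $C([-k,k]^d,\C)$ so that the bi-Lipschitz property of $\exp$ on that family can be invoked — equivalently, proving the higher-dimensional analogue of Proposition \ref{apsProp}\,b) and deducing from it the uniform boundedness and equicontinuity of the characteristic exponents. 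Once that is in place, every implication reduces to Bochner's double-sequence criterion plus the classical continuity theorem for infinitely divisible distributions.
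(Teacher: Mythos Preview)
Your overall architecture — pass between a) and b) via Theorem~\ref{thechar} and a uniform lower bound on $|\hat\mu_t|$, and pass between b) and c) via the convergence theorem for infinitely divisible triplets plus Bochner's double-sequence criterion — is the same as the paper's, which proves the cycle a)$\Rightarrow$b)$\Rightarrow$c)$\Rightarrow$a) along exactly these lines. The a)$\Leftrightarrow$b) part of your plan is essentially correct; the paper carries it out by bounding $\Real\psi_t(z)$ from below on $[-k,k]^d$ using relative compactness (Sato, Exercise~12.5) and then a short argument about the zero set of $w\mapsto 1-e^w$.

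There is, however, a genuine gap in your b)$\Leftrightarrow$c) step. Your claim that $\rho\mapsto \frac{|x|^3\wedge1}{|x|^2\wedge1}\,\rho$ is a \emph{homeomorphism} on $\mathcal M(\R^d)$ is false: the density $|x|\wedge1$ vanishes at the origin, so the inverse map multiplies by the unbounded factor $(|x|\wedge1)^{-1}$ and is not weakly continuous. Concretely, with $\nu_n=n^2\delta_{1/n}$ one has $(|x|^2\wedge1)\nu_n=\delta_{1/n}\to\delta_0$ while $(|x|^3\wedge1)\nu_n=n^{-1}\delta_{1/n}\to 0$. This matters here: the paper's Remark following Theorem~\ref{theoremID} explicitly records that $t\mapsto(|x|^2\wedge1)\nu_t$ is in general \emph{not} almost periodic (it can fail to be continuous, because mass from $\nu_t$ near the origin may be ``absorbed'' into the Gaussian part in the limit). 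So you cannot deduce almost periodicity of \eqref{condition3} by first getting it for $(|x|^2\wedge1)\nu_t$ and then transporting; the correct route, which the paper takes, is to invoke the form of the convergence theorem (Jacod--Shiryaev VII.2.9 and Remark~2.10) that \emph{directly} yields weak convergence of $(|x|^3\wedge1)\nu_t$ together with convergence of the combined quantity $A_t+\int xx^Tc_d(x)^2\nu_t(dx)$, precisely so that the small-jump/Gaussian ambiguity is handled by \eqref{condition2} rather than hidden inside the L\'evy-measure component.

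A second point you pass over too quickly is the direction c)$\Rightarrow$a) (your c)$\Rightarrow$b)). It is not pure bookkeeping: given that the three quantities in c) converge along a sequence, one must verify that the weak limit of $(|x|^3\wedge1)\nu_{s_n}$ is again of the form $(|x|^3\wedge1)\nu'$ for a genuine L\'evy measure $\nu'$, and that $A'-\int xx^Tc_d(x)^2\nu'(dx)$ is positive semidefinite. The paper does this by first establishing tightness of the corresponding ID laws from \eqref{condition1}--\eqref{condition3} (again Sato, Exercise~12.5), extracting a convergent subsequence, and identifying its triplet; only then does Jacod--Shiryaev close the loop. Without that tightness step, your argument that ``joint convergence of the three pieces implies convergence of $\psi$'' is incomplete.
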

\begin{remark}
That $((|x|^2\wedge 1)\,\nu_{t_1+t,\dotso,t_d+t}(dx))_{t\in\bR}$ is relatively compact follows immediately from the conditions \eqref{condition2} and \eqref{condition3}, but it is not true that the function $(|x|^2\wedge 1)\,\nu_{t_1+t,\dotso,t_d+t}(dx)$ is almost periodic. The reason is that an almost periodic function needs to be continuous by definition, nevertheless $t\mapsto (|x|^2\wedge 1)\,\nu_{t_1+t,\dotso,t_d+t}(dx)$ is not necessarily continuous, which can be seen from condition \eqref{condition2}. One can overcome this shortcoming in dimension one by adding the Gaussian variances to the L\'evy measures as a delta-distribution at point 0, but a similar approach seems to be difficult in dimension greater than 1 as the Gaussian variance is then a matrix (see [\ref{Jacod}, Remark 2.10, Section VII.2].
\end{remark}
\begin{proof}
We show a)$\implies$ b)$\implies$c)$\implies$a). Moreover, we fix a vector $(t_1,\dotso,t_d)$ and always write $\gamma_t,\nu_t,A_t,\psi_t$ and $\mu_t$ for $\gamma^{c_d}_{t+t_1,\dotso,t+t_d}, \nu_{t+t_1,\dotso,t+t_d}, A_{t+t_1,\dotso,t+t_d},\psi_{t+t_1,\dotso,t+t_d}$ and $\mu_{t+t_1,\dotso,t+t_d}$, respectively.\\
a)$\implies$b). Let $(X_t)_{t\in\R}$ be almost periodic stationary and $t_1,\dotso,t_n\in\R$. If $s_n\to t$ as $n\to\infty$, then $\mathcal{L}(X_{t_1+s_n},\dotso, X_{t_d+s_n})$ converges weakly to $\mathcal{L}(X_{t_1+t},\dotso, X_{t_d+t})$ as $n\to\infty$, hence the characteristic function and hence $\psi_{t_1+s_n,\dotso,t_d+s_n}$ converges locally uniformly to $\psi_{t_1+t,\dotso,t_d+t}$, cf. [\ref{Sato}, Lemma 7.7], showing continuity of $t\mapsto~\psi_{t_1+t,\dotso,t_d+t|_{[-k,k]^d}}$. By Theorem \ref{thechar} we know that the functions $t\mapsto \hat{\mu}_{t|_{[-k,k]^d}}$ are almost periodic in the norm $\|\cdot\|_{C([-k,k]^d)}$ for every $k\in \bN$. Observe that
\begin{align}
   |\hat{\mu}_t(z)-\hat\mu_{t+\tau}(z)|=&|\exp\left(\psi_t(z)\right)-\exp\left(\psi_{t+\tau}(z)\right)|\nonumber\\
   =&|\exp(\psi_{t+\tau}(z))|\cdot|1-\exp\left(\psi_t(z)-\psi_{t+\tau}(z)\right)|\label{eqChar}
\end{align}
and
\begin{align*}
    |\exp(\psi_{t}(z))|=&\exp(\Real\psi_t(z))\\
    =&\exp\left(-\left(\frac{1}{2}\langle z, A_tz\rangle+\int\limits_{\bR^d} (1-\cos(\langle x,z\rangle))\nu_t(dx)\right)\right)\\
    \geq &\exp\left(-\left(\frac{1}{2}\langle z,A_tz\rangle+|z|^2\int\limits_{|x|\le 1} |x|^2\nu_t(dx)+2\nu_t(\{x\in\R^d:|x|>1\})\right)\right)
\end{align*}
(where we used $1-\cos(y)\le y^2$ and the Cauchy-Schwarz inequality).\,Since the set $(\mu_t)_{t\in\R}$ is relatively compact by Proposition \ref{apsProp} b),
\begin{align*}
   \sup_{t\in \bR}\sup_{z\in [-k,k]^d} \frac{1}{2}\langle z, A_tz\rangle+|z|^2\int\limits_{|x|\le 1} |x|^2\nu_t(dx)+2\nu_t(\{x\in\R^d:|x|>1\})<\infty
\end{align*}
for every $k\in\N$ by [\ref{Sato}, Exercise 12.5, p.66].\,The previous estimates then imply that
\begin{align*}
    \inf_{t\in \bR}\inf_{z\in [-k,k]^d}|\exp(\psi_{t}(z))|>0.
\end{align*}
Since $t\mapsto \hat{\mu}_{t|_{[-k,k]^d}}$ is almost periodic, this together with (\ref{eqChar}), implies that for every $\delta>0$ there exists an $L_{\delta}>0$ and for each $a\in\R$ a $\tau=\tau(a,\delta)\in [a,a+L_{\delta}]$ such that $\sup\limits_{t\in\R}\sup\limits_{z\in [-k,k]^d} |1-\exp(\psi_t(z) - \psi_{t+\tau}(z))|<\delta$. Since $\C\ni x+iy\mapsto 1-e^{x+iy}$ is continuous with period $2\pi i$ and zero set $2\pi i \Z$, and since $\lim\limits_{x\to\infty} e^x=\infty$ and $\lim\limits_{x\to -\infty} e^x=0$, $|1-e^{x+iy}|<\delta <1$ implies $x+iy\in \bigcup\limits_{m\in\Z} \{ 2\pi im+w: w\in\C, |w|<\varepsilon \}=: D_{\varepsilon}$ for some $\varepsilon=\varepsilon(\delta)>0$, where $\varepsilon(\delta)\to 0$ as $\delta\to 0$. Hence $\psi_t(z)-\psi_{t+\tau}(z) \in D_{\varepsilon(\delta)}$ for every $z\in [-k,k]^d$ and $t\in\R$, and since $\psi_t-\psi_{t+\tau}$ is continuous with $\psi_t(0)=\psi_{t+\tau}(0)=0$, we conclude
\begin{align*}
    \sup\limits_{t\in\R} \sup\limits_{z\in [-k,k]^d} |\psi_t(z)-\psi_{t+\tau}(z)|<\varepsilon
\end{align*}
when $\varepsilon=\varepsilon(\delta)<\pi$, giving the desired almost periodicity of $t\mapsto \hat{\mu}_{t|_{[-k,k]^d}}$.\\
b) $\implies$ c).\,First observe that continuity of $t\mapsto \psi_{t|_{[-k,k]^d}}$ implies continuity of $t\mapsto \hat{\mu}_{t|_{[-k,k]^d}}$ which by [\ref{Jacod}, Theorem 2.9 and Remark 2.10, Section VII.2] implies continuity of the functions given in (\ref{condition1})--(\ref{condition3}). Let us show their almost periodicity. Let $(a'_n)_{n\in\bN}$, $(b'_m)_{m\in\bN}\subset\bR$ be two sequences. By Bochner's double sequence criterion and a standard diagonal subsequence argument, there exist subsequences $(a_n)_{n\in\bN}$ and $(b_m)_{m\in\bN}$ such that 
\begin{align*}
\lim_{n\to\infty}\lim_{m\to\infty}\psi_{t+a_n+b_m} \textrm{ and } \lim_{n\to\infty}\psi_{t+a_n+b_n}
\end{align*}
exist (in the sense of locally uniform convergence) and
\begin{align*}
    \lim_{n\to\infty}\lim_{m\to\infty}\psi_{t+a_n+b_m}(z)=\lim_{n\to\infty}\psi_{t+a_n+b_n}(z)=:\psi_{t,\infty}(z)
\end{align*}
for every $t\in\bR$ and $z\in\bR^d$. As the set of infinitely divisible distributions is closed with respect to the Prokhorov metric (see [\ref{Sato}, Lemma 7.8]), we know that $\lim_{m\to\infty}\psi_{t+a_n+b_m}$ and $\psi_{t,\infty}$ are again the characteristic exponents of infinitely divisible distributions. Denote the characteristic triplet corresponding to $\psi_{t,\infty}$ by $(\gamma_{t,\infty},A_{t,\infty},\nu_{t,\infty})_c$. Then by [\ref{Jacod}, Theorem 2.9 and Remark 2.10, Section VII.2] 
\begin{align*}
    &\lim_{n\to\infty}\lim_{m\to\infty}\gamma_{t+a_n+b_m}=\lim_{n\to\infty}\gamma_{t+a_n+b_n}=\gamma_{t,\infty},\\
    &\lim_{n\to\infty}\lim_{m\to\infty}(|x|^3\wedge 1)\,\nu_{t+a_n+b_m}(dx)=\lim_{n\to\infty}(|x|^3\wedge 1)\,\nu_{t+a_n+b_n}(dx)=(|x|^3\wedge 1)\,\nu_{t,\infty}(dx),
\end{align*}
where the limits are taken weakly, and
\begin{align*}
    &\lim_{n\to\infty}\lim_{m\to\infty}A_{t+a_n+b_m}+\int\limits_{\bR^d} xx^Tc_d(x)^2\nu_{t+a_n+b_m}(dx)\\
    &=\lim_{n\to\infty}A_{t+a_n+b_n}+\int\limits_{\bR^d} xx^Tc_d(x)^2\nu_{t+a_n+b_n}(dx) =A_{t,\infty}+\int\limits_{\bR^d} xx^T c_d(x) \nu_{t,\infty}(dx).
\end{align*}
Hence, by Bochner's double sequence criterion, the functions $\gamma_t$, $(|x|^3\wedge 1) \nu_t(dx)$ and $A_{t}+\int\limits_{\bR^d} xx^Tc(x)^2\nu_{t}(dx)$ are almost periodic.
\\
c)$\implies$ a). The continuity of $t\mapsto \mu_t$ follows again from [\ref{Jacod}, Theorem 2.9 and Remark 2.10, Section VII.2]. For almost periodicity, we 
show that $t\mapsto \mu_t$ satisfies the double sequence criterion. Therefore, let $(a'_n)$ and $(b'_n)$ be sequences. Then there exist subsequences $(a_n)_{n\in\bN}$ and $(b_n)_{n\in\bN}$ such that 
\begin{align*}
    \lim_{n\to\infty}\lim_{m\to\infty} \gamma_{t+a_n+b_m}&=\lim_{n\to\infty} \gamma_{t+a_n+b_n},\\
     \lim_{n\to\infty}\lim_{m\to\infty} A_{t+a_n+b_m}+\int\limits_{\bR^d} x x^Tc_d(x)^2 \nu_{t+a_n+b_n}(dx)
     &=\\\lim_{n\to\infty} A_{t+a_n+b_n}
     &+\int\limits_{\bR^d} xx^Tc_d(x)^2\nu_{t+a_n+b_n}(dx),\\
     \lim_{n\to\infty}\lim_{m\to\infty}(|x|^3\wedge 1)\nu_{t+a_n+b_m}(dx)&=\lim_{n\to\infty}(|x|^3\wedge 1)\nu_{t+a_n+b_n}(dx).
\end{align*}
We have to show for the subsequences $(a_n)_{n\in\bN}$ and $(b_n)_{n\in\bN}$ that 
\begin{align*}
    \lim_{n\to\infty}\lim_{m\to\infty} \mu_{t+a_n+b_m}=\lim_{n\to\infty}\mu_{t+a_n+b_n}.
\end{align*}
This follows again from [\ref{Jacod}, Theorem 2.9 and Remark 2.10, Section VII.2], if we can show that for a sequence $(\gamma_n)_{n\in\N}\subset \R^d$, a sequence $(\nu_n)_{n\in\N}$ of L\'evy measures and symmetric positive semi-definite $d\times d$-matrices $(A_n)_{n\in\N}$, the conditions
\begin{align}
    &\lim_{n\to\infty}\gamma_n= \gamma,\label{limitGamma}\\
    &\lim_{n\to\infty} A_{n}+\int\limits_{\bR^d} xx^T c(x)^2\nu_n(dx)=A'\label{limitGauss},\\
    &\lim_{n\to\infty} (|x|^3\wedge 1)\nu_n(dx)=(|x|^3\wedge 1)\nu'(dx)\label{limitLM},
\end{align}
for $\gamma\in\R^d$, $A'\in\R^{d\times d}$ and $(|x|^3\wedge 1)\nu'\in \mathcal{M}(\R^d)$ imply that $\nu'$ is a L\'evy measure and that $A'-\int\limits_{\R^d} x x^T c_d(x)\nu'(dx)$ is symmetric and positive semi-definite.\,To see this, observe that (\ref{limitGamma}) and (\ref{limitGauss}) imply boundedness of $(\gamma_n)_{n\in\N}$ and $(A_n)_{n\in\N}$ (observe that $\int\limits_{\R^d} xx^Tc_d(x)\nu_n(dx)$ is positive semi-definite), (\ref{limitGauss}) together with $c_d(x)=1$ for $|x|\le 1$ implies $\sup\limits_{n\in\N} \int\limits_{|x|\le 1} |x|^2 \nu_n(dx)<\infty$, and (\ref{limitLM}) together with the fact that $(|x|^3\wedge 1)\nu'(dx)\in \mathcal{M}(\R^d)$ implies $\sup\limits_{n\in\N} \int\limits_{|x|>1} \nu_n(dx)<\infty$ and $\lim\limits_{k\to\infty}\sup\limits_{n\in\N} \int\limits_{|x|> k} \nu_n(dx) =0$. Hence, the sequence $(\rho_n)_{n\in\N}$ of infinitely divisible distributions with characteristic triplets $(A_n,\gamma_n,\nu_n)$ is tight by [\ref{Sato}, Exercise 12.5], so that a subsequence of $(\rho_{n})_{n\in\N}$ converges to an infinitely divisible distribution with characteristic triplet $(A_{\infty}, \gamma_{\infty},\nu_{\infty})$, say. But then $A_{\infty}+ \int\limits_{\R^d} xx^T c_d(x)\nu_{\infty}(dx)=A'$, $\gamma_{\infty} = \gamma$ and $\nu_{\infty} = \nu'$ by [\ref{Jacod}, Theorem 2.9 and Remark 2.10, Section VII.2], so that $\nu'$ is a L\'evy measure and $A'-\int\limits_{\R^d} xx^Tc_d(x)\nu_{\infty}(dx)$ is positive semi-definite, finishing the proof.
\end{proof}

\section{almost periodic stationarity of $X_t= \int_{\R^d} f(t,s) dL(s)$}
In this section we derive estimates for random variables $X,Y$ of the form 
\begin{align}
    \binom{X}{Y}=\int\limits_{\bR^d}\binom{f(t)}{g(t)}dL(t),\label{2xy}
\end{align}
where $L$ is a L\'evy basis and $f,g:\R^d\to \R^m$ are deterministic functions. In particular, we apply these estimates to obtain conditions for stochastic processes $(X_t)_{t\in\R}$ of the form $$X_t=\int\limits_{\R^d} f(t,s)dL(s),\quad t\in\R,$$ to be almost periodic stationary.\\ 
We denote by 
$\mathcal{B}_b(\R^d)$ the set of all bounded Borel sets and by $\lambda^d$ the Lebesgue measure on $\R^d$.\,We recall the definition of a L\'evy basis and the Rajput-Rosinski exponent $\Psi$ of a L\'evy basis 
in order to obtain the above mentioned estimates.

\begin{definition}
A \emph{L\'evy basis} on $\R^d$ is a family $(L(A))_{A\in \mathcal{B}_b(\R^d)}$ of real valued random variables such that
\begin{itemize}
    \item[i)] $L\left( \bigcup_{n\in\N_0} A_n \right) = \sum_{n\in\N_0} L(A_n)$ a.s. for pairwise disjoint sets $(A_n)_{n\in\N_0}\subset  \mathcal{B}_b(\R^d)$ with $\bigcup_{n\in\N_0} A_n \in \mathcal{B}_b(\R^d)$.
    \item[ii)] $L(A_i)$ are independent for pairwise disjoint sets $A_1,\dots,A_n \in  \mathcal{B}_b(\R^d)$ for every $n\in\N$.
    \item[iii)] There exists $a\in [0,\infty)$, $\gamma \in \R$ and a L\'evy measure $\nu $ on $\R$ such that 
\begin{align*}
    \widehat{L(A)}(z)=\mathbb{E} e^{izL(A)} = \exp(\psi(z) \lambda^d(A) ),\quad A\in  \mathcal{B}_b(\R^d),\, z\in\R,
\end{align*}
where 
\begin{align*}
    \psi_L(z) = i\gamma z - \frac{1}{2}a z^2 + \int_{\R}(e^{ixz}-1-ixz\mathds{1}_{[-1,1]}(x) )\nu(dx),\quad z\in\R.
\end{align*}
The triplet $(a,\gamma,\nu)$ is called \emph{characteristic triplet} of $L$ and $\psi_L$ its \emph{characteristic exponent}.
\end{itemize}
\end{definition}





Let $L$ be a L\'evy basis on $\R^d$ with characteristic triplet $(a,\gamma,\nu)$ and let $f:\R^d\to \R$ be a Borel measurable function. Then $f$ is in the \emph{domain of $L$} and we write $f\in D(L)$, if the integral $\int\limits_{\R^d} f(s)\,dL(s)$ can be defined as a limit in probability of integrals of suitable simple functions as outlined by Rajput and Rosinski in [\ref{Rajput}, p. 460] and we say then that the integral exists in the sense of Rajput and Rosinski. By [\ref{Rajput},~Theorem~2.7], $f\in D(L)$ if and only if
\begin{align}
    \int\limits_{\R^d} \Psi(f(t))\, dt<\infty,\label{RajputChar}
\end{align}
where 
\begin{align*}
    \Psi(z):= | U(z) | + a z^2  +V(z), \quad z\in\R,
\end{align*}
is the \emph{Rajput-Rosinski exponent of $L$} with
\begin{align*}
    U(z):&=  \gamma z   +\int\limits_{\R}sz (\mathds{1}_{| sz|\le 1 } - \mathds{1}_{|s|\le 1} ) \,\nu(ds),\\
    V(z):&= \int\limits_{\R} \min( 1, s^2z^2) \,\nu(ds).
\end{align*}
%
If $f=(f_1,\dotso,f_m)^T$ with $f_1,\dotso,f_m\in D(L)$, then $$\int\limits_{\R^d} f(s)\,dL(s)=\left(\int\limits_{\R^d} f_1(s)\,dL(s),\dotso, \int\limits_{\R^d} f_m(s)\,dL(s)\right)^T$$ is infinitely divisible with characteristic function
\begin{align}
    \exp\left( \,\,\int\limits_{\R^d}\psi_L(z^Tf(t))\,dt \right),\quad z\in\R^m.\label{charIntegral2}
\end{align}

\begin{definition}
For a measurable function $f:\R^d\to\R$ we define the \emph{distribution function} of $f$ as
\begin{align*}
    d_f(\alpha)=\lambda^d(\{ x\in \R^d: |f(x)|>\alpha \}) \textrm{,  }\alpha>0.
\end{align*}
\end{definition}
We introduce a short notation for functions $f:\bR^d\to\bR^m$, which are in a sense good integrands for the L\'{e}vy basis $L$. 
\begin{definition}We say that $f=(f_1,\dotso,f_m)^T\in B(L,\bR^m)$, if the integral
\begin{align*}
    \int\limits_{|r|>1}|r|\int\limits_{0}^{1/|r|} d_{f_k}(\alpha)\,d\alpha\,\nu(dr)
\end{align*}
is finite for every $k\in \{1,\dotso,m\}$. If the dimension $m$ is clear, we simply write $B(L)$.
\end{definition}
In the next theorem, we discuss the distance between two characteristic exponents of stochatic integrals with different kernels. Later, we use this theorem to derive sufficient conditions for almost periodicity of processes defined by stochastic integrals.\\ In \cite[Proposition 5.2]{Berger} it was shown that $B(L,\R)\cap L^1(\bR^d,\R)\cap L^2(\bR^d,\R)\subset D(L)$ (more precisely, all components of $f$ are in $D(L)$).\,We denote by $f^+$ and $f^-$ the positive and negative part of a real-valued function $f$, respectively. 
\begin{theorem}\label{theoremloolio}
Let $L$ be a L\'evy basis on $\R^d$ with characteristic triplet $(a,\gamma,\nu)$ and let $(X,Y)\in \R^{n\times n}$ be a $2n$-dimensional infinitely divisible random vector given by $$\binom{X}{Y}=\int\limits_{\bR^d}\binom{f(t)}{g(t)}dL(t),$$ 
where $f,g\in L^1(\R^d,\R^n)\cap L^2(\R^d,\R^n)\cap B(L)$. Let $\psi_f$ and $\psi_g$ be the characteristic exponents of $X$ and $Y$, respectively. Then 
\begin{align*}
 &\left|\psi_f(z)-\psi_g(z)\right|\\
    &\le\int\limits_{|r|\le 1} r^2\nu(dr)\left(\int\limits_0^\infty \alpha(|d_{(z^Tf)^+}(\alpha)-d_{(z^Tg)^+}(\alpha)|+|d_{(z^Tf)^-}(\alpha)-d_{(z^Tg)^-}(\alpha)|)\,d\alpha\right)\\
    &\,\,+\frac{a}{2} \left|\,\int\limits_{\R^d} (z^Tf(x))^2dx-\int\limits_{\R^d} (z^Tg(x))^2dx\right|+\left|\gamma z^T\left(\,\,\int\limits_{\bR^d} f(x)dx-\int\limits_{\bR^d}g(x)dx\right)\right|\\
    &\,\,+\int\limits_{1<|r|\le R} |r|\,\nu(dr)\left(\int_0^{\infty} |d_{(z^Tf)^+}(\alpha)-d_{(z^Tg)^+}(\alpha)|+ |d_{(z^Tf)^-}(\alpha)-d_{(z^Tg)^-}(\alpha)|\,d\alpha\right)\\
    &\,\,+3\int\limits_{|r|>R}|r| \int_{0}^{1/|r|} d_{z^Tf}(\alpha)\,d\alpha \,\nu(dr)+3\int\limits_{|r|>R}|r| \int_{0}^{1/|r|} d_{z^Tg}(\alpha)\,d\alpha \,\nu(dr)
\end{align*}
for every $z\in \bR^n$ and $R>1$.
\end{theorem}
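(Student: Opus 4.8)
The plan is to write out $\psi_f(z)-\psi_g(z)$ explicitly using the L\'evy--Khintchine representation \eqref{charIntegral2} for the stochastic integral, so that
\[
\psi_f(z)-\psi_g(z) = \int_{\R^d}\bigl(\psi_L(z^Tf(t))-\psi_L(z^Tg(t))\bigr)\,dt,
\]
and then to bound this difference term by term according to the three components (drift, Gaussian, jump) of $\psi_L$. For the Gaussian part the integrand difference is $\tfrac{a}{2}\bigl((z^Tf(t))^2-(z^Tg(t))^2\bigr)$, whose integral over $\R^d$ is exactly the second line of the claimed bound; the $L^2$-assumption guarantees finiteness. For the genuine drift part $\gamma z$ (the $\gamma z^T\int f - \gamma z^T\int g$ term) one uses $f,g\in L^1$ and linearity; this produces the third line. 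The remaining, and main, work is the jump part: one must handle the integral over $\R^d$ of $\int_\R (e^{i z^Tf(t)\, s}-1-iz^Tf(t)\,s\mathds{1}_{|s|\le1})\,\nu(ds)$ minus the analogous expression with $g$, and re-express it in terms of distribution functions $d_{(z^Tf)^{\pm}}$, $d_{(z^Tg)^{\pm}}$ via the layer-cake formula.

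The key device for converting the $dt$-integral into $\alpha$-integrals of distribution functions is the identity $\int_{\R^d}\phi(|h(t)|)\,dt = \int_0^\infty \phi'(\alpha)\,d_h(\alpha)\,d\alpha$ (for suitable $\phi$ with $\phi(0)=0$), applied after splitting $h=z^Tf$ into positive and negative parts; this is precisely the mechanism already used to characterize $B(L)$ and $D(L)$ in the cited Rajput--Rosinski framework and in \cite[Proposition 5.2]{Berger}. I would split the $\nu$-integral at $|r|\le 1$, $1<|r|\le R$, and $|r|>R$. On $|r|\le 1$ one uses the second-order Taylor bound $|e^{iu}-1-iu|\le u^2/2$ on the integrand; the resulting factor $r^2$ pulls out, the factor $(z^Tf(t))^2$ is turned into $\int_0^\infty 2\alpha\,d_{(z^Tf)^+}(\alpha)\,d\alpha$ etc., and bounding $|d_{(z^Tf)^{\pm}}-d_{(z^Tg)^{\pm}}|$ pointwise gives the first line of the claim (the factor $2$ from Taylor cancels the $\tfrac12$; one keeps the difference rather than a sum). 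On $1<|r|\le R$ one uses instead the first-order bound $|e^{iu}-1|\le \min(2,|u|)$ together with $|{-iu\mathds{1}_{|s|\le1}}|$ vanishing there (since $|s|=|r|>1$), so the integrand is $\le \min(2,|z^Tf(t)\,r|)$; converting $\int_{\R^d}\min(1,|r||z^Tf(t)|)\,dt$ via the layer-cake formula and then differencing yields the fourth line (the relevant truncation level is again absorbed into the $d$-functions, and the crude constant from $\min(2,\cdot)$ versus $\min(1,\cdot)$ is handled by enlarging $R$ or by a factor that the statement has conservatively bounded). On $|r|>R$ one does not difference at all but bounds each of the $f$- and $g$-contributions separately by $\int_{|r|>R}|r|\int_0^{1/|r|} d_{z^Tf}(\alpha)\,d\alpha\,\nu(dr)$ (and likewise for $g$), with the constant $3$ absorbing the $+1$, the $-1$, and the truncation term in the L\'evy--Khintchine integrand via $|e^{iu}-1-iu\mathds{1}|\le 3\min(1,|u|)$ on that range — this gives the last line.

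The main obstacle is the bookkeeping on the middle annulus $1<|r|\le R$ and on $|r|\le 1$: one must be careful that the truncation indicator $\mathds{1}_{|s|\le 1}$ in $\psi_L$ interacts correctly with the splitting (it only contributes when $|r|\le1$), and that after applying the layer-cake identity to $(z^Tf)^+$ and $(z^Tf)^-$ separately one genuinely obtains the \emph{difference} of distribution functions $|d_{(z^Tf)^{\pm}}(\alpha)-d_{(z^Tg)^{\pm}}(\alpha)|$ rather than merely a sum — this requires noting that the two $dt$-integrals are finite separately (guaranteed by $f,g\in D(L)$, i.e. $f,g\in B(L)\cap L^1\cap L^2$, via \cite[Proposition 5.2]{Berger}), so that the difference of integrals equals the integral of the difference, and then estimating the integrand difference pointwise in $\alpha$. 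Once finiteness of every piece is secured by the hypotheses, the estimate follows by assembling the five displayed lines, each corresponding to one of the regimes above.
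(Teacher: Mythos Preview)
Your overall architecture—splitting $\psi_L$ into drift, Gaussian, and three jump ranges $|r|\le1$, $1<|r|\le R$, $|r|>R$, and converting $dt$-integrals into $\alpha$-integrals of distribution functions—matches the paper exactly. The drift, Gaussian, and $|r|>R$ pieces are handled essentially as you describe (one small correction: on $|r|>R>1$ the truncation $\mathds{1}_{|s|\le1}$ already vanishes, so the integrand is simply $e^{iu}-1$; the constant $3$ comes from splitting $\{|r\,z^Tf(x)|\le1\}$, where $|e^{iu}-1|\le|u|$ contributes one copy of $|r|\int_0^{1/|r|}d_{z^Tf}$, and $\{|r\,z^Tf(x)|>1\}$, where $|e^{iu}-1|\le2$ contributes $2\,d_{z^Tf}(1/|r|)\le 2|r|\int_0^{1/|r|}d_{z^Tf}$).

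There is, however, a real gap in your treatment of the regions $|r|\le1$ and $1<|r|\le R$. Applying the pointwise Taylor bound $|e^{iu}-1-iu|\le u^2/2$ (respectively $|e^{iu}-1|\le|u|$) to the integrand \emph{before} differencing can only yield a \emph{sum} $d_{(z^Tf)^{\pm}}+d_{(z^Tg)^{\pm}}$, never the difference $|d_{(z^Tf)^{\pm}}-d_{(z^Tg)^{\pm}}|$ that the statement requires. The paper's device—and the one your own layer-cake identity actually points to—is to keep an \emph{equality} through the layer-cake step: since $\psi_1(0)=\psi_1'(0)=0$ one writes $\psi_1(w)=\int_0^w\int_0^y\psi_1''(\alpha)\,d\alpha\,dy$, and Fubini gives the identity
\[
\int_{\R^d}\psi_1(z^Tf(x))\,dx=\int_0^\infty d_{(z^Tf)^+}(y)\int_0^y\psi_1''(\alpha)\,d\alpha\,dy+\int_0^\infty d_{(z^Tf)^-}(y)\int_0^{-y}\psi_1''(\alpha)\,d\alpha\,dy,
\]
and the analogous identity for $g$; subtracting these and \emph{only then} invoking $\bigl|\int_0^y\psi_1''\bigr|\le y\int_{|r|\le1}r^2\,\nu(dr)$ produces the first line with the genuine difference of distribution functions. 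The middle range is treated the same way with one antiderivative, $\psi_{2,R}(w)=\int_0^w\psi_{2,R}'(\alpha)\,d\alpha$, followed by Fubini, subtraction, and then $|\psi_{2,R}'|\le\int_{1<|r|\le R}|r|\,\nu(dr)$. Your final paragraph gestures toward this (``difference of integrals equals the integral of the difference, then estimate pointwise in $\alpha$''), but that is inconsistent with the earlier Taylor-bound description; as written, the argument on these two regions does not go through.
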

\begin{proof}
By (\ref{charIntegral2}) the characteristic exponents of $X$ and $Y$ can be represented by $\psi_f(z)=\int\limits_{\bR^d} \psi_L( z^T f(y))dy$ and $\psi_g=\int\limits_{\bR^d} \psi_L( z^T g(y))dy$.\,We split $\psi_L$ into four parts
\begin{align*}
   \psi_L(z)=\psi_{1}(w)+\psi_{2,R}(w)+\psi_{3,R}+\psi_4(w)\textrm{, }R>1,\, w\in\R,
\end{align*}
where 
\begin{align*}
    \psi_1(w)&:=\int\limits_{|r|\le 1} (e^{irw}-1-irw) \nu(dr),\\
    \psi_{2,R}(w)&:=\int\limits_{1<|r|\le R} (e^{irw}-1) \nu(dr),\\
    \psi_{3,R}(w)&:=\int\limits_{|r|>R} (e^{irw}-1) \nu(dr)\textrm{ and }\\
    \psi_4(w)&:=-\frac{a}{2}  w^2+i\gamma w.
\end{align*}
Denoting by $L_1$, $L_{2,R}$, $L_{3,R}$ and $L_{4}$ the L\'evy bases with the corresponding exponents, it is easily seen that also $f,g\in B(L_1)\cap B(L_{2,R})\cap B(L_{3,R})\cap B(L_{4})$ and hence $f$ and $g$ are in the corresponding domains.
Clearly, $\psi_1,\psi_{2,R}\in C^\infty(\bR)$ and
\begin{align*}
    \psi_1'(w)&=\int\limits_{|r|\le 1} ir(e^{irw}-1)\nu(dr),\\
    \psi_1''(w)&=-\int\limits_{|r|\le 1} e^{irw} r^2\nu(dr),\\
    \psi_{2,R}'(w)&=i\int\limits_{1<|r|\le R} e^{irw} r\nu(dr).
\end{align*}
Assume for the moment that $z^Tf\ge 0$, then we know
\begin{align*}
    &\int\limits_{\bR^d} \psi_1(z^Tf(x))dx+\int\limits_{\bR^d} \psi_{2,R}(z^Tf(x))dx\\
    =&\int\limits_{\bR^d} \int_0^{z^Tf(x)}\int_0^y\psi_1''(\alpha)d\alpha\,dy\,dx+\int\limits_{\bR^d}\int_0^{z^Tf(x)} \psi'_{2,R}(y)\,dy\,dx\\
    =& \int_0^{\infty} d_{z^Tf}(y)\int_0^y\psi_1''(\alpha)\,d\alpha\,dy+\int_0^{\infty}d_{z^Tf}(y) \psi'_{2,R}(y)\,dy,
\end{align*}
where we used that $f\in L^1(\R^d,\R^n)\cap L^2(\R^d,\R^n)$, the boundedness of $\psi_1''$ and $\psi'_{2,R}$ and [\ref{Grafakos}, Proposition 1.1.4] in order to apply Fubini's theorem.\,A similar reasoning when $z^Tf\le 0$ and decomposing $z^Tf$ into positive and negative parts then leads to 
\begin{align}
    \psi_f(z) =& \int\limits_0^{\infty} d_{(z^Tf)^+}(y)\int_0^y\psi_1''(\alpha)\,d\alpha\,dy+\int\limits_0^{\infty} d_{(z^Tf)^-}(y)\int_0^{-y}\psi_1''(\alpha)\,d\alpha\,dy\label{esPsif}\\
    &+\int_0^{\infty}(d_{(z^Tf)^+}(y) \psi'_{2,R}(y)+d_{(z^Tf)^-}(y) \psi'_{2,R}(-y))\,dy-\frac{a}{2}\int\limits_{\R^d} (z^Tf(x))^2 dx\nonumber\\
    &+ i\gamma z^T\int\limits_{\R^d} f(x)dx+\int\limits_{\bR^d} \psi_{3,R}(z^Tf(x))dx.\nonumber
\end{align}
A similar formula holds for $\psi_g(z)$. Since $|\psi_1''(\alpha)|\le \int_{|r|\le 1} r^2\nu(dr)$ and $|\psi_{2,R}'(\alpha)|\le~\int_{1<|r|\le R} |r|\nu(dr)$, the claim will follow if we can show 
\begin{align*}
    &\left|\int\limits_{\bR^d} \psi_{3,R}(z^Tf(x))dx-\int\limits_{\bR^d} \psi_{3,R}(z^Tg(x))dx\right|\\
    &\le 3\int\limits_{|r|>R}|r| \int_{0}^{1/|r|} d_{z^Tf}(\alpha)d\alpha \nu(dr)+3\int\limits_{|r|>R}|r| \int_{0}^{1/|r|} d_{z^Tg}(\alpha)d\alpha \nu(dr).
\end{align*}
To see this, we observe 
\begin{align*}
    \int\limits_{|r|>R}\int\limits_{\bR^d} (e^{ir z^Tf(x)}-1)dx \nu(dr)=& \int\limits_{|r|>R}\,\,\int\limits_{|r z^Tf(x)|\le 1} (e^{ir z^Tf(x)}-1)dx \nu(dr)\\
    &+ \int\limits_{|r|>R}\,\,\int\limits_{|rz^Tf(x)|> 1} (e^{ir z^Tf(x)}-1)dx \nu(dr)
\end{align*}
and it follows
\begin{align}
\left|\,\int\limits_{|r|>R}\int\limits_{\bR^d} (e^{ir z^Tf(x)}-1)dx\nu(dr)\right|\le& \int\limits_{|r|>R}|r| \int\limits_{|r z^Tf(x)|\le 1} |z^Tf(x)|dx\nu(dr)\label{lstterm}\\
&+2 \int\limits_{|r|>R}d_{z^Tf}\left(\frac{1}{|r|}\right)\nu(dr)\nonumber\\
\le&3\int\limits_{|r|>R}|r| \int_{0}^{1/|r|} d_{z^Tf}(\alpha)d\alpha \nu(dr),\nonumber
\end{align}
using that $|e^{ix}-1|= |ix\int_0^1 e^{itx}dt | \le |x|$ for $x\in\R$, and applying [\ref{Grafakos}, Exercise~1.1.10,~ p.14] on the first term. An application of Fubini's theorem finishes the proof.
\end{proof}
As we now obtained for processes $(X,Y)$ of the form (\ref{2xy}) an estimate for the difference of their characteristic functions in terms of the distribution functions $d_f, d_g$, we can derive a sufficient condition when processes $(X_t)_{t\in\R}$ given by $X_t~=~\int_{\R^d} f(t,s)dL(s)$ with $f(t,\cdot)\in L^1(\R^d,\R)\cap L^2(\R^d,\R)$ are almost periodic stationary.\,To do so, we need the following lemma.  
\begin{lemma}\label{lemmaloolio}
Let $p\in [1,\infty)$ and $f,g\in L^p(\bR^d,\R)$. Denote by $$\prod(f,g):=\{(\bar{f},\bar{g})\in L^p(\bR^d,\R)\times L^p(\bR^d,\R):d_f=d_{\bar{f}}\textrm{ and }d_g=d_{\bar{g}}\}.$$ Then
\begin{align*}
    \int\limits_{0}^\infty \alpha^{p-1} |d_f(\alpha)-d_g(\alpha)|d\alpha\le (\|f\|_{L^p}^{p-1}+\|g\|_{L^p}^{p-1}) \inf_{(\bar{f},\bar{g})\in \prod(f,g)}\|\bar{f}-\bar{g}\|_{L^p}.
\end{align*}
\end{lemma}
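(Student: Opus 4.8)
The plan is to reduce the inequality to a pointwise statement about the "layer-cake" representation of the $L^p$-norm, and then to exploit the fact that the distribution function depends only on the rearrangement class of a function. First I would recall the layer-cake identity
\[
\|f\|_{L^p}^p = \int_0^\infty p\alpha^{p-1} d_f(\alpha)\,d\alpha,
\]
so that $\int_0^\infty \alpha^{p-1}d_f(\alpha)\,d\alpha = \tfrac1p\|f\|_{L^p}^p$, and similarly for $g$. Thus the left-hand side $\int_0^\infty \alpha^{p-1}|d_f(\alpha)-d_g(\alpha)|\,d\alpha$ is controlled by a "signed" comparison of the two layer-cake integrands; the key point is that $d_f = d_{\bar f}$ and $d_g = d_{\bar g}$ for any $(\bar f,\bar g)\in\prod(f,g)$, so the left-hand side is \emph{invariant} under replacing $(f,g)$ by any pair in $\prod(f,g)$, whereas the right-hand side's infimum is exactly designed to take advantage of this invariance. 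So it suffices to prove, for a \emph{fixed} pair $(\bar f,\bar g)$,
\[
\int_0^\infty \alpha^{p-1}|d_{\bar f}(\alpha)-d_{\bar g}(\alpha)|\,d\alpha \le (\|\bar f\|_{L^p}^{p-1}+\|\bar g\|_{L^p}^{p-1})\,\|\bar f-\bar g\|_{L^p},
\]
and then take the infimum over $\prod(f,g)$ on the right (noting the norms $\|\bar f\|_{L^p}=\|f\|_{L^p}$ and $\|\bar g\|_{L^p}=\|g\|_{L^p}$ are constant on $\prod(f,g)$).

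Next I would estimate $|d_{\bar f}(\alpha)-d_{\bar g}(\alpha)|$ pointwise. One has the inclusion-type bound
\[
d_{\bar f}(\alpha) - d_{\bar g}(\alpha) = \lambda^d(\{|\bar f|>\alpha\}) - \lambda^d(\{|\bar g|>\alpha\}) \le \lambda^d(\{|\bar f|>\alpha\}\setminus\{|\bar g|>\alpha\}) \le \lambda^d(\{|\bar f-\bar g| > \text{something}\}),
\]
but a cleaner route is to integrate against $\alpha^{p-1}$ directly: write
\[
\int_0^\infty \alpha^{p-1}\bigl(d_{\bar f}(\alpha)-d_{\bar g}(\alpha)\bigr)^+\,d\alpha = \int_0^\infty \alpha^{p-1}\lambda^d\bigl(\{|\bar f|>\alpha \ge |\bar g|\}\bigr)\,d\alpha \wedge \cdots
\]
and observe that $d_{\bar f}(\alpha)-d_{\bar g}(\alpha)\le \lambda^d(\{|\bar f|>\alpha\ge|\bar g|\})$, while $d_{\bar g}(\alpha)-d_{\bar f}(\alpha)\le \lambda^d(\{|\bar g|>\alpha\ge|\bar f|\})$. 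On the set $\{|\bar f|>\alpha\ge|\bar g|\}$ we have $|\bar f-\bar g|\ge |\bar f|-|\bar g| \ge \alpha - |\bar g|$, and more usefully $|\bar f| - |\bar g| = \int_{|\bar g|}^{|\bar f|} 1\,dt$, so
\[
\int_0^\infty \alpha^{p-1}|d_{\bar f}(\alpha)-d_{\bar g}(\alpha)|\,d\alpha \le \int_{\R^d}\int_{|\bar g(x)|\wedge|\bar f(x)|}^{|\bar g(x)|\vee|\bar f(x)|} \alpha^{p-1}\,d\alpha\,dx
= \frac1p\int_{\R^d} \bigl| |\bar f(x)|^p - |\bar g(x)|^p\bigr|\,dx,
\]
after swapping the order of integration via Tonelli (all integrands nonnegative). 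It then remains to bound $\bigl||\bar f|^p - |\bar g|^p\bigr|$ pointwise: by the mean value theorem applied to $t\mapsto t^p$ on the interval between $|\bar f(x)|$ and $|\bar g(x)|$,
\[
\bigl| |\bar f(x)|^p - |\bar g(x)|^p\bigr| \le p\,\bigl(|\bar f(x)|\vee|\bar g(x)|\bigr)^{p-1}\,\bigl| |\bar f(x)| - |\bar g(x)|\bigr| \le p\,\bigl(|\bar f(x)|^{p-1}+|\bar g(x)|^{p-1}\bigr)\,|\bar f(x)-\bar g(x)|.
\]
Integrating and applying H\"older's inequality with exponents $p/(p-1)$ and $p$ gives
\[
\int_{\R^d} \bigl(|\bar f|^{p-1}+|\bar g|^{p-1}\bigr)|\bar f-\bar g|\,dx \le \bigl(\|\bar f\|_{L^p}^{p-1}+\|\bar g\|_{L^p}^{p-1}\bigr)\|\bar f-\bar g\|_{L^p},
\]
which combined with the factor $\tfrac1p$ from the layer-cake step yields the claimed bound for the fixed pair $(\bar f,\bar g)$; taking the infimum over $\prod(f,g)$ finishes the proof.

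The main obstacle I anticipate is the bookkeeping in the step identifying $\int_0^\infty \alpha^{p-1}|d_{\bar f}-d_{\bar g}|\,d\alpha$ with $\tfrac1p\int |\,|\bar f|^p-|\bar g|^p|\,dx$: one must be careful that $|d_{\bar f}(\alpha)-d_{\bar g}(\alpha)| \le \lambda^d(\{\alpha \text{ lies between } |\bar f(x)| \text{ and } |\bar g(x)|\})$ really does hold for a.e.\ $\alpha$ (it does, since if $\alpha$ is below both or above both, the symmetric difference of the two superlevel sets is empty at that level), and that the subsequent Tonelli interchange is legitimate. The case $p=1$ needs no mean value theorem, since then $\bigl||\bar f|-|\bar g|\bigr|\le |\bar f-\bar g|$ and the prefactor $\|\bar f\|^{0}+\|\bar g\|^{0}=2$ already appears naturally. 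Everything else is a routine application of the layer-cake formula, H\"older's inequality, and the definition of $\prod(f,g)$.
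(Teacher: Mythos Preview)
Your proposal is correct and follows essentially the same route as the paper's proof: reduce to a fixed pair $(\bar f,\bar g)\in\prod(f,g)$, bound $|d_{\bar f}(\alpha)-d_{\bar g}(\alpha)|$ by $\lambda^d(\{|\bar f|\le\alpha<|\bar g|\})+\lambda^d(\{|\bar g|\le\alpha<|\bar f|\})$, integrate via Tonelli to obtain $\tfrac1p\int_{\R^d}\bigl||\bar f|^p-|\bar g|^p\bigr|\,dx$, apply the pointwise inequality $\bigl||a|^p-|b|^p\bigr|\le p(|a|^{p-1}+|b|^{p-1})|a-b|$ (with the reverse triangle inequality when $p=1$), and finish with H\"older. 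One minor remark: for $p=1$ your argument actually yields the sharper bound $\|\bar f-\bar g\|_{L^1}$ rather than $2\|\bar f-\bar g\|_{L^1}$, so the prefactor $2$ does not ``appear naturally'' but is simply an overestimate---this is harmless for the stated lemma.
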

\begin{proof}
Let $(\bar{f},\bar{g})\in \prod(f,g)$. We see
\begin{align*}
    p\int\limits_{0}^\infty \alpha^{p-1} |d_f(\alpha)-d_g(\alpha)|d\alpha&= p\int\limits_{0}^\infty \alpha^{p-1} |d_{\bar{f}}(\alpha)-d_{\bar{g}}(\alpha)|d\alpha\\
    &= p\int\limits_{0}^\infty \alpha^{p-1} \left| \int\limits_{\bR^d} \left(\mathds{1}_{|\bar{f}(x)|>\alpha}(x)-\mathds{1}_{|\bar{g}(x)|>\alpha}(x)\right) dx\right|d\alpha.
\end{align*}
Since
\begin{align*}
    |\mathds{1}_{|\bar{f}(x)|>\alpha}(x)-\mathds{1}_{|\bar{g}(x)|>\alpha}(x)|=\mathds{1}_{|\bar{f}(x)|\le\alpha< |\bar{g}(x)|}(x)+\mathds{1}_{|\bar{g}(x)|\le\alpha<|\bar{f}(x)|}(x)
\end{align*}
we obtain by the triangular inequality
\begin{align*}
    &p\int\limits_{0}^\infty \alpha^{p-1} |d_f(\alpha)-d_g(\alpha)|d\alpha \\
    \le & p \int\limits_{\bR^d}\int\limits_{0}^\infty\mathds{1}_{|\bar{f}(x)|\le\alpha< |\bar{g}(x)|} \alpha^{p-1}d\alpha \,dx+p\int\limits_{\bR^d}\int\limits_{0}^\infty\mathds{1}_{|\bar{g}(x)|\le\alpha< |\bar{f}(x)|} \alpha^{p-1} d\alpha\, dx\\
    =&p\int\limits_{\bR^d} \mathds{1}_{|\bar{f}(x)|<|\bar{g}(x)|}\int\limits_{|\bar{f}(x)|}^{|\bar{g}(x)|}\alpha^{p-1} d\alpha\, dx+p\int\limits_{\bR^d} \mathds{1}_{|\bar{g}(x)|<|\bar{f}(x)|}\int\limits_{|\bar{g}(x)|}^{|\bar{f}(x)|}\alpha^{p-1} d\alpha \,dx\\
    =&\int\limits_{\bR^d} \mathds{1}_{|\bar{f}(x)|<|\bar{g}(x)|}(|\bar{g}(x)|^p-|\bar{f}(x)|^p) dx+\int\limits_{\bR^d} \mathds{1}_{|\bar{g}(x)|<|\bar{f}(x)|}(|\bar{f}(x)|^p-|\bar{g}(x)|^p) dx\\
    =&\int\limits_{\bR^d} \big| |\bar{f}(x)|^p-|\bar{g}(x)|^p\big| dx.
    \end{align*}
    We know that $\big||a|^p-|b|^p\big|\le p(|a|^{p-1}+|b|^{p-1})\big||a|-|b|\big|$ for $p>1$ and for $p=1$ we use that $\big||a|-|b|\big|\le |a-b|$ for every $a,b\in\bR$. Hence, we have
    \begin{align*}
    \int\limits_{0}^\infty \alpha^{p-1} |d_f(\alpha)-d_g(\alpha)|d\alpha\le&  \int\limits_{\bR} |\bar{f}(x)-\bar{g}(x)|(|\bar{f}(x)|^{p-1}+|\bar{g}(x)|^{p-1})dx\\
    \le& \|\bar{f}-\bar{g}\|_{L^p}(\|f\|_{L^p}^{p-1}+\|g\|_{L^p}^{p-1}),
\end{align*}
where we used in the last line that $\|f\|_{L^p}=\|\bar{f}\|_{L^p}$ and the H\"older inequality with exponent $p$ (if $p>1$).
\end{proof}

\begin{corollary}\label{a10} 
Let $f:\R\times \R^d\to\R$ be a measurable function with $f(t,\cdot)\in L^1(\R^d,\R)\cap L^2(\R^d,\R)$ such that the function $T_f:\R\to L^1(\R^d,\R)\cap L^2(\R^d,\R)$ given by $T_f(t):=f(t,\cdot)$ is continuous in $L^1(\R^d,\R)$ and $L^2(\R^d,\R)$.\,Furthermore, let $L$ be a L\'evy basis with characteristic triplet $(a,\gamma,\nu)$ such that 
\begin{align*}
    \int\limits_{|r|>1} |r| \sup\limits_{t\in\R} \int\limits_{0}^{1/|r|} d_{f(t,\cdot)}\left(\alpha\right)\,d\alpha \,\nu(dr)< \infty.
\end{align*}
Then $X_t:=\int_{\bR^d}f(t,s)dL(s)$ is almost periodic stationary if for every $\varepsilon>0$ there exist some $L_{\varepsilon}$ and $\tau\in[a,a+L_{\varepsilon}]$ as well as $s_1(\tau),s_2(\tau) \in\R$ for all $a\in\R$ such that
\begin{align}
    \sup\limits_{t\in\R}\|f(t,\cdot)-f(t+\tau,\cdot+s_1(\tau))\|_{L^1}+\sup\limits_{t\in\R}\|f(t,\cdot)-f(t+\tau,\cdot+s_2(\tau))\|_{L^2} <\varepsilon.\label{apstationarylp}
\end{align}
\end{corollary}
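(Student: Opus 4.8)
The plan is to verify the characterization of Theorem~\ref{thechar}. Fix $n\in\bN$, a vector $(t_1,\dots,t_n)\in\bR^n$, a compact $K\subset\bR^n$ and $\ep_0>0$; since any compact set lies in some $[-N,N]^n$ with $N\ge1$ and \eqref{t1} for $[-N,N]^n$ implies it for $K$, I may assume $K=[-N,N]^n$, $N\ge1$. For $x\in\bR$ put $F_x:=(f(t_1+x,\cdot),\dots,f(t_n+x,\cdot))^T\colon\bR^d\to\bR^n$. The hypothesis on $\nu$ forces $f(t,\cdot)\in B(L)$ for every $t$, so by \cite[Proposition 5.2]{Berger} and $f(t,\cdot)\in L^1\cap L^2$ one has $f(t,\cdot)\in D(L)$; hence by \eqref{charIntegral2} the vector $(X_{t_1+x},\dots,X_{t_n+x})=\int_{\bR^d}F_x(s)\,dL(s)$ is infinitely divisible with characteristic exponent $\psi_{F_x}(z)=\int_{\bR^d}\psi_L(z^TF_x(s))\,ds$, and $F_x\in L^1(\bR^d,\bR^n)\cap L^2(\bR^d,\bR^n)\cap B(L)$. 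I would also record two facts: applying \eqref{apstationarylp} with $\ep=1$ (call the resulting length $L_1$) and, for a given $u$, the choice $a=-u$, gives $\tau$ with $u+\tau\in[0,L_1]$ and $\|f(u,\cdot)-f(u+\tau,\cdot+s_i(\tau))\|_{L^i}<1$, so continuity of $T_f$ on $[0,L_1]$ yields $M_i:=\sup_{u\in\bR}\|f(u,\cdot)\|_{L^i}<\infty$ for $i=1,2$; and, writing $h(r):=\sup_{t\in\bR}\int_0^{1/|r|}d_{f(t,\cdot)}(\al)\,d\al$, the map $|r|\mapsto h(r)$ is non-increasing and $\rho(R):=\int_{|r|>R}|r|h(r)\,\nu(dr)\to 0$ as $R\to\infty$.

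The core of the argument is to bound $|\psi_{F_x}(z)-\psi_{F_{x+\tau}}(z)|$ by applying Theorem~\ref{theoremloolio} with $f:=F_x$ and $g:=F_{x+\tau}$ (no spatial shift in $g$). The crucial observation is that a spatial translation leaves distribution functions unchanged, and also leaves $\int_{\bR^d}F_{x+\tau}(s)\,ds$ and $\int_{\bR^d}(z^TF_{x+\tau}(s))^2\,ds$ unchanged, so the pair $\bigl((z^TF_x)^{\pm},(z^TF_{x+\tau}(\cdot+s_i(\tau)))^{\pm}\bigr)$ lies in $\prod\bigl((z^TF_x)^{\pm},(z^TF_{x+\tau})^{\pm}\bigr)$. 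Bounding the small-jump and Gaussian terms of Theorem~\ref{theoremloolio} using the $L^2$-closeness afforded by the translation $s_2(\tau)$ (the small-jump one via Lemma~\ref{lemmaloolio} with $p=2$), and the drift and medium-jump terms using the $L^1$-closeness afforded by $s_1(\tau)$ (the medium-jump one via Lemma~\ref{lemmaloolio} with $p=1$), and using $|a^{\pm}-b^{\pm}|\le|a-b|$, $\|z^TF_x-z^TF_{x+\tau}(\cdot+s_i(\tau))\|_{L^i}\le N\sum_j\|f(t_j+x,\cdot)-f(t_j+x+\tau,\cdot+s_i(\tau))\|_{L^i}$ and the bounds $M_1,M_2$, these four terms are at most a constant (depending only on $n,N,M_1,M_2,a,\gamma,\nu$, and for the medium-jump term also on $C_R:=\int_{1<|r|\le R}|r|\,\nu(dr)$) times $\delta(\tau):=\sup_t\|f(t,\cdot)-f(t+\tau,\cdot+s_1(\tau))\|_{L^1}+\sup_t\|f(t,\cdot)-f(t+\tau,\cdot+s_2(\tau))\|_{L^2}$. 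For the two large-jump terms, from $|z^TF_{x'}(s)|\le N\sum_j|f(t_j+x',s)|$ and $\min(\sum_jb_j,A)\le\sum_j\min(nb_j,A)$ one gets $\int_0^{1/|r|}d_{z^TF_{x'}}(\al)\,d\al\le n^2N\,h(nNr)\le n^2N\,h(r)$ --- the last inequality using $nN\ge1$, which is why the reduction to $N\ge1$ was made --- so each is at most $3n^2N\,\rho(R)$. Altogether
\begin{align*}
\sup_{x\in\bR}\ \sup_{z\in K}\bigl|\psi_{F_x}(z)-\psi_{F_{x+\tau}}(z)\bigr|\ \le\ (A_1+A_2C_R)\,\delta(\tau)+A_3\,\rho(R)
\end{align*}
with constants $A_1,A_2,A_3$ independent of $x,z,\tau,R$.

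To conclude, given $\ep_0>0$ I would first choose $R\ge1$ with $A_3\rho(R)\le\tfrac12$ and $eA_3\rho(R)<\ep_0/2$, and then $\ep>0$ with $(A_1+A_2C_R)\ep\le\tfrac12$ and $e(A_1+A_2C_R)\ep<\ep_0/2$. By \eqref{apstationarylp} there is $L_\ep>0$ such that every $a\in\bR$ admits $\tau\in[a,a+L_\ep]$ with $\delta(\tau)<\ep$; for such $\tau$ the right-hand side above is $\le1$, so $|e^{\psi_{F_{x+\tau}}(z)}|\le1$ and $|1-e^{w}|\le e|w|$ for $|w|\le1$ give $\sup_x\sup_{z\in K}\bigl|\widehat{\mathcal L}(X_{t_1+x},\dots,X_{t_n+x})(z)-\widehat{\mathcal L}(X_{t_1+x+\tau},\dots,X_{t_n+x+\tau})(z)\bigr|<\ep_0$, i.e.\ \eqref{t1} with $l=L_\ep$. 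The continuity statement \eqref{t2} is proved the same way: applying Theorem~\ref{theoremloolio} with $f=F_{x_0}$, $g=F_x$ and the trivial modifications in $\prod$, the first four terms are bounded by a constant (for $R$ fixed) times $\|F_{x_0}-F_x\|_{L^1}+\|F_{x_0}-F_x\|_{L^2}\to0$ as $x\to x_0$ by continuity of $T_f$, while the large-jump terms are $\le6n^2N\rho(R)$, small for $R$ large. Theorem~\ref{thechar} then yields that $(X_t)_{t\in\bR}$ is almost periodic stationary.

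The step I expect to be the main obstacle is coordinating the two spatial translations $s_1(\tau)$ and $s_2(\tau)$: Theorem~\ref{theoremloolio} admits only one function $g$, so one must verify that each of its six terms depends on $F_{x+\tau}$ only through translation-invariant data (its integral, $\int(z^TF_{x+\tau})^2$, and distribution functions), which is precisely what allows the $L^1$-good shift to handle the drift and medium-jump terms and the $L^2$-good shift to handle the Gaussian and small-jump terms. The second delicate point is the uniform-in-$x$ smallness of the large-jump tail, which is exactly what the hypothesis $\int_{|r|>1}|r|\sup_t\int_0^{1/|r|}d_{f(t,\cdot)}(\al)\,d\al\,\nu(dr)<\infty$ provides, together with the harmless reduction to $K=[-N,N]^n$, $N\ge1$, that makes $h(nNr)\le h(r)$ available.
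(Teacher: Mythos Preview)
Your argument is correct and follows essentially the same route as the paper: both bound $|\psi_{F_x}(z)-\psi_{F_{x+\tau}}(z)|$ via Theorem~\ref{theoremloolio}, exploit that spatial translations leave distribution functions (and the integrals $\int F$, $\int(z^TF)^2$) invariant to feed the shifted kernels into Lemma~\ref{lemmaloolio}, and absorb the large-jump remainder by choosing $R$ large using the hypothesis on $\nu$. The only difference is the outer wrapper: the paper invokes Theorem~\ref{theoremID}\,b), which gives almost periodic stationarity directly from almost periodicity of the characteristic \emph{exponents}, whereas you go through Theorem~\ref{thechar} and therefore need the additional (but harmless) passage $|1-e^{w}|\le e|w|$ for $|w|\le1$ to convert the exponent bound into a bound on characteristic functions; similarly, the paper imports stochastic continuity from \cite{BergerMohamed} while you verify \eqref{t2} by the same estimate. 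These are cosmetic differences; the substance of the proof is the same.
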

\begin{proof}
From [\ref{BergerMohamed}, Proposition 3] it follows that $(X_t)_{t\in\R}$ is in the domain of $L$ and that $(X_t)_{t\in\R}$ is stochastically continuous. Next, observe that the continuity of $t\mapsto T_f(t)$ in $L^p(\R^d,\R)$ implies boundedness of $\|T_f(t)\|_{L^p}$ for $t$ in compacts, and using (\ref{apstationarylp}), we get $\sup_{t\in\R} \|T_f(t)\|_{L^p}<\infty$ for $p=1,\,2$.\,The rest follows directly from Theorem~\ref{theoremID}~b), Theorem \ref{theoremloolio} and Lemma~\ref{lemmaloolio}, by observing that for fixed $t_1,\dotso,t_d\in\R$, and $z\in [-k,k]^d$, $k\in\N$,
\begin{align*}
    &\bigg( \big(f(t+t_1+\tau,\cdot),\dotso,f(t+t_d+\tau,\cdot )\big)z\bigg)^+\\
\intertext{and}
    &\bigg( \big(f(t+t_1+\tau,\cdot+s_i(\tau)),\dotso,f(t+t_d+\tau,\cdot +s_i(\tau))\big)z\bigg)^+,\, i=1,2,
\end{align*}
have the same distribution function and choosing $R>1$ large enough such that 
\begin{align*}
    \sup\limits_{z\in [-k,k]^d} \int\limits_{|r|>R} |r| \sup\limits_{t\in\R}\int\limits_{0}^{1/|r|} d_{\left( f(t+t_1,\cdot),\dotso, f(t+t_d,\cdot) \right)z}(\alpha)d\alpha\nu(dr)
\end{align*}
becomes sufficiently small.
\end{proof}
A canonical choice for $s_1(\tau)$ and $s_2(\tau)$ in (\ref{apstationarylp}) is $s_1(\tau)=s_2(\tau)=0$, which leads to almost periodicity in probability, as shown in Theorem \ref{a1} below. Another canonical choice is $s_1(\tau)=s_2(\tau)=\tau$. While the choice $s_1(\tau)=s_2(\tau)=0$ is not possible to ensure almost periodic stationarity of Ornstein-Uhlenbeck processes under suitable conditions, the choice $s_1(\tau)=s_2(\tau)=\tau$ works for this class as will be shown in Section 6.

\section{Almost periodicity in probability}
In the following, we take up the definition of stochastic processes that are almost periodic in probability (see [\ref{Bedouhene}], [\ref{Precupanu}]).\,Again, the definition is independent of the choice of the metric as long as it generates the same topology.\,We introduce the Ky-Fan metric, which induces the topology of convergence in probability (see [\ref{Dudley}, p. 289]) and derive conditions ensuring that processes of the form $X_t=\int_{\R^d} f(t,s)dL(s)$ are almost periodic in probability by estimating the Ky-Fan metric.\,Further, we state examples for such processes $(X_t)_{t\in\R}$.\,Let $(\Omega, \mathcal{F}, P)$ be a probability space and let $L^0(\Omega,\R)$ be the space of measurable functions from $\Omega$ to $\R$. We identify two random variables which coincide $P$-almost surely.\,Observe that the Ky-Fan metric is in general not separable (see [\ref{Doob},~Section IV.3,~Theorem] for sufficient conditions for separability),\,but as mentioned in Remark \ref{Rem <3}, separability is not needed for our considerations. Specialising Definition \ref{DefiAPD} now to this setting, we have:




\begin{definition}
Let $d$ be a metric on $L^0(\Omega,\R)$ which induces convergence in probability. A stochastically continuous process $X=(X_t)_{t\in\R}$ is \emph{almost periodic in probability} if the function $\bR\to L^0(\Omega,\R)$, $t\mapsto X_t$, is almost periodic with respect to the metric $d$. 
\end{definition}

Again, the specific form of the metric $d$ is irrelevant as long as it induces convergence in probability. Typical examples are 
\begin{align*}
    d(X,Y)=\mathbb{E}\left( \frac{|X-Y|}{1+|X-Y|} \right),\\
    d(X,Y)=\mathbb{E}\min(1,|X-Y|),
\end{align*}
or the Ky-Fan metric $\alpha$, defined by 
\begin{align*}
    \alpha(X,Y):=\inf\{\varepsilon\ge 0: P(|X-Y|>\varepsilon)\le \varepsilon \}
\end{align*}
(see [\ref{Dudley}, p. 289]). It is known that $X=(X_t)_{t\in\R}$ is almost periodic in probability if and only if $X$ is stochastically continuous and if for any $\varepsilon>0$ and $\eta>0$, there exists $L_{\varepsilon,\eta}>0$ such that any interval of length $L_{\varepsilon,\eta}$ contains at least a $\tau\in\R$ for which
\begin{align*}
    P(|X_t-X_{t+\tau}|>\eta)\le \varepsilon \quad \textrm{ for all } t\in\R
\end{align*}
(see  [\ref{Bedouhene}, p. 328], [\ref{Precupanu}, Definition 2.3 and Remark 2.6 I)]). As expected, almost periodicity in probability implies almost periodic stationarity, see [\ref{Bedouhene}, Theorem 2.14]. 

Next, we prove estimates of the Ky-Fan metric $\alpha(X,Y)$, where $X,Y$ are defined as in (\ref{2xy}), in terms of the characteristic triplet of $L$, $\|f-g\|_{L^1}$, $\|f-g\|_{L^2}$ and the distribution functions of $f$ and $g$.


\begin{lemma}\label{d1Lemma}
Let $L$ be a L\'evy basis on $\R^d$ with characteristic triplet $(a,\gamma,\nu)$ and let $(X,Y)$ be a two-dimensional infinitely divisible random vector given by $$\binom{X}{Y}=\int\limits_{\bR^d}\binom{f(t)}{g(t)}dL(t),$$ where $f,g:\R^d\to\R $ are in $D(L)$.
\begin{itemize}
\item[(i)] If $f,g\in L^2(\bR^d,\R)\cap D(L)$ and $L$ has finite second and vanishing first moment, i.e. $\sigma^2:= \mathbb{E}L([0,1]^d)^2<\infty$ and $\mathbb{E}L([0,1]^d)=0$, then \begin{align*}
        \alpha(X,Y)\le \min\left(1, \left(\sigma^2 \|f-g\|^2_{L^2}\right)^{\frac{1}{3}}\right).
    \end{align*}
\item[(ii)] Let $f,g\in D(L)$ with $f,g\in L^1(\R^d,\R)\cap L^2(\R^d,\R) $.\,Then
\begin{align*}
    \alpha(X,Y)\le\min\left( 1,(14\cdot I_R(f-g))^{\frac{1}{3}}  \right),
\end{align*}
for every $R>1$, where 
\begin{align*}
    I_{R}(f-g):=&  \frac{1}{2}\|f-g\|_{L^2}^2\int\limits_{|r|\le 1} r^2\nu(dr)  + \frac{a}{2} \|f-g\|_{L^2}^2 + |\gamma|\,\|f-g\|_{L^1}\\
    &+ \,\|f-g\|_{L^1}\int\limits_{1<|r|\le R} |r|\nu(dr) + 4\int\limits_{|r|>R}|r|\int\limits_{0}^{1/|r|} (d_{f}(\alpha)+d_{g}(\alpha)) d\alpha\,\nu(dr).
\end{align*}
\end{itemize}
\end{lemma}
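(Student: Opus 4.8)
The plan is to bound the Ky-Fan metric $\alpha(X,Y)$ by first controlling the characteristic function of $X-Y$ on a neighbourhood of the origin, and then invoking the standard inequality relating $P(|Z|>\eta)$ to $\int_{|z|\le r}(1-\Real\,\widehat{Z}(z))\,dz$. Note that $X-Y=\int_{\R^d}(f(t)-g(t))\,dL(t)$ is a one-dimensional infinitely divisible random variable with characteristic exponent $\psi_{f-g}(z)=\int_{\R^d}\psi_L(z(f(t)-g(t)))\,dt$, so $1-\Real\,\widehat{X-Y}(z)=1-e^{\Real\psi_{f-g}(z)}\le -\Real\psi_{f-g}(z)$, and the latter is nonnegative. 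Thus the whole task reduces to estimating $|\psi_{f-g}(z)|$ (or just its real part) for small real $z$, for which we can reuse the decomposition $\psi_L=\psi_1+\psi_{2,R}+\psi_{3,R}+\psi_4$ from the proof of Theorem~\ref{theoremloolio}.

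For part (i), where $L$ has a finite second moment and zero mean, I would argue that $|\psi_L(w)|\le \tfrac12\sigma^2 w^2$ for all $w\in\R$: indeed, the second-moment/zero-mean assumption means $\psi_L(w)=-\tfrac12 a w^2+\int_\R(e^{iwr}-1-iwr)\,\nu(dr)$ (the drift cancels), and using $|e^{iy}-1-iy|\le \tfrac12 y^2$ together with $a+\int r^2\nu(dr)=\sigma^2$ gives the bound. Hence $|\psi_{f-g}(z)|\le \tfrac12\sigma^2 z^2\|f-g\|_{L^2}^2$. Then for any $\eta>0$, using $P(|X-Y|>\eta)\le \tfrac{C}{\eta}\int_{-1/\eta}^{1/\eta}(1-\Real\,\widehat{X-Y}(z))\,dz$ (a Chebyshev-type bound, e.g.\ from the standard estimate $P(|Z|>\eta)\le 7\eta\int_0^{1/\eta}(1-\Real\widehat Z(z))\,dz$ type argument), we obtain $P(|X-Y|>\eta)\le c\,\sigma^2\|f-g\|_{L^2}^2/\eta^2$. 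Choosing $\eta=(\sigma^2\|f-g\|_{L^2}^2)^{1/3}$ balances $\eta$ against $P(|X-Y|>\eta)\le \eta$ and yields $\alpha(X,Y)\le \min(1,(\sigma^2\|f-g\|_{L^2}^2)^{1/3})$, where the $\min$ with $1$ is automatic since $\alpha\le 1$ always.

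For part (ii), the same scheme applies but now $\Real\psi_{f-g}(z)$ must be estimated term by term via the decomposition. The contribution of $\psi_1$ is bounded using $|\psi_1(w)|\le \tfrac12 w^2\int_{|r|\le1}r^2\nu(dr)$ and integrating against $(f-g)^2$; the contribution of $\psi_4$ gives the $\tfrac a2\|f-g\|_{L^2}^2+|\gamma|\|f-g\|_{L^1}$ terms; $\psi_{2,R}$ is controlled by $|\psi_{2,R}(w)|\le |w|\int_{1<|r|\le R}|r|\nu(dr)$ and integrating against $|f-g|$; and the remaining $\psi_{3,R}$ piece is handled exactly as in the estimate \eqref{lstterm} of Theorem~\ref{theoremloolio}, replacing the triangle-inequality split by $d_{f-g}\le d_f+d_g$ is not quite valid pointwise, so instead I would bound $\big|\int\psi_{3,R}((f-g)(x))\,dx\big|$ directly by $3\int_{|r|>R}|r|\int_0^{1/|r|}d_{f-g}(\alpha)\,d\alpha\,\nu(dr)$ and then crudely estimate $d_{f-g}(\alpha)\le d_f(\alpha/2)+d_g(\alpha/2)$, absorbing the constant into the factor $4$ after a change of variables. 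Summing these gives $|\psi_{f-g}(z)|\le I_R(f-g)$ for $|z|\le 1$ (after noting the factor $z^2,|z|\le 1$ throughout), and the same $\eta=(14\,I_R(f-g))^{1/3}$ balancing argument—with the numerical constant $14$ emerging from the Chebyshev constant times the bookkeeping—finishes the proof.

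The main obstacle I anticipate is the bookkeeping in part (ii): tracking the various numerical constants from (a) the inequality converting the characteristic-function bound into a tail probability, (b) the $d_{f-g}$-versus-$d_f+d_g$ comparison in the large-jump term, and (c) the $\psi_{3,R}$ estimate borrowed from Theorem~\ref{theoremloolio}, so that they combine into exactly the stated constant $14$. There is also a minor technical point that one should verify that all the integrals in $I_R(f-g)$ are finite under the hypothesis $f,g\in D(L)\cap L^1\cap L^2$, which follows from \eqref{RajputChar} and the definition of $\Psi$ together with the triangle inequality for distribution functions.
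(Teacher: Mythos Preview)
Your overall strategy for part (ii) is exactly the paper's: bound $P(|X-Y|>\delta)$ by $7\cdot\tfrac{\delta}{2}\int_{-1/\delta}^{1/\delta}(1-\Real\varphi_{X-Y}(z))\,dz$, dominate this by $|\psi_{f-g}(z)|$, and then estimate $\psi_{f-g}$ via the decomposition $\psi_L=\psi_1+\psi_{2,R}+\psi_{3,R}+\psi_4$ and the inequality $d_{f-g}(\alpha)\le d_f(\alpha/2)+d_g(\alpha/2)$. Two points need repair, however. First, the identity $1-\Real\,\widehat{X-Y}(z)=1-e^{\Real\psi_{f-g}(z)}$ is false (you are forgetting the factor $\cos(\Im\psi_{f-g}(z))$); the paper instead uses $1-\Real e^{\psi}\le |1-e^{\psi}|=|\psi|\,\big|\int_0^1 e^{t\psi}\,dt\big|\le |\psi|$, since $|e^{t\psi}|\le 1$. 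Second, and more importantly for the constant $14$: you cannot collapse to ``$|\psi_{f-g}(z)|\le I_R(f-g)$ for $|z|\le 1$'' and be done, because the tail inequality forces you to integrate over $|z|\le 1/\delta$ with $\delta\le 1$, i.e.\ over arbitrarily large $|z|$. The paper keeps the $z$-dependence, obtaining $|\psi_{f-g}(z)|\le (1+\max(|z|^2,|z|))\,I_R(f-g)$, then computes $7\cdot\tfrac{\delta}{2}\int_{-1/\delta}^{1/\delta}(1+\max(|z|^2,|z|))\,dz\le 7(1+\delta^{-2})$ for $\delta\in(0,1]$. The condition $7(1+\delta^{-2})I_R\le \delta$ rewrites as $7I_R\le \delta^3/(\delta^2+1)$, and since $\delta^2+1\le 2$ one lands on $\delta=(14\,I_R)^{1/3}$; this is precisely where the $14$ comes from, not from the $\psi_{3,R}$ bookkeeping you anticipated.

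For part (i) your characteristic-function route would work in spirit but does not recover the stated bound with constant $1$: plugging $|\psi_{f-g}(z)|\le \tfrac12\sigma^2 z^2\|f-g\|_{L^2}^2$ into the same tail inequality yields $P(|X-Y|>\eta)\le \tfrac{7}{3}\sigma^2\|f-g\|_{L^2}^2/\eta^2$, hence an extra factor $(7/3)^{1/3}$ after balancing. The paper instead argues directly: under the moment hypotheses one has $\mathbb{E}(X-Y)=0$ and $\mathrm{Var}(X-Y)=\sigma^2\|f-g\|_{L^2}^2$, so ordinary Chebyshev gives $P(|X-Y|>\eta)\le \sigma^2\|f-g\|_{L^2}^2/\eta^2$, and the choice $\eta=(\sigma^2\|f-g\|_{L^2}^2)^{1/3}$ yields the Ky-Fan bound as stated.
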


\begin{proof}
If the assumptions in (i) hold we have $\mathbb{E}(X-Y)=0$ and Var$(X-Y)=\sigma^2 \|f-g\|_{L^2}^2$ and therefore obtain with Chebyshev's inequality 
\begin{align*}
        \alpha(X,Y)\le  \min\left(1, \left(\sigma^2 \|f-g\|^2_{L^2}\right)^{\frac{1}{3}}\right) .
\end{align*}
In order to show (ii) we first calculate with [\ref{Sasvari}, Lemma 1.6.2] for $\delta>0$,
\begin{align}
P(|X-Y|>\delta)\le 7  \cdot\frac{\delta}{2} \int\limits_{-1/\delta}^{1/\delta}(1-\Re(\varphi_{X-Y}(z)))\,dz \le 7 \cdot\frac{\delta}{2} \int\limits_{-1/\delta}^{1/\delta}|\psi_{f-g}(z)|\,dz\label{KyFanEst},
\end{align}
where $\psi_{f-g}$ is the characteristic exponent of the infinitely divisible random variable $X-Y$ and $\Re(\cdot)$ denotes the real part of a complex number; the last inequality follows from  
\begin{align*}
    |1-e^{\psi(z)}|= |\psi(z)| \left|\int\limits_0^1 e^{t\psi(z)}\,dt\right|\le  |\psi(z)| \int\limits_0^1\left| e^{t\psi(z)}\right| \,dt \le  |\psi(z)|,
\end{align*}
where we used that $z\mapsto e^{t\psi(z)}$ is a characteristic function (of an infinitely divisible distribution), hence $|e^{t\psi(z)}|\le 1$. The proof of Theorem \ref{theoremloolio} (compare (\ref{esPsif}) and its subsequent estimates) gives us for $R>1$ and $z\in\R$\newpage
\begin{align*}
    &|\psi_{f-g}(z)|\\
    &\le \int\limits_{|r|\le 1} r^2\nu(dr) \int\limits_0^{\infty}\alpha (d_{z(f-g)^+}(\alpha) + d_{z(f-g)^-}(\alpha))d\alpha + \frac{a}{2}|z|^2 \|f-g\|_{L^2}^2 + |\gamma||z|\|f-g\|_{L^1}\\
    &+ \int\limits_{1<|r|\le R} |r|\nu(dr) \int\limits_{0}^{\infty} (d_{z(f-g)^+}(\alpha) + d_{z(f-g)^-}(\alpha))d\alpha \\
    &+ \left|\,\,\int\limits_{|r|>R}\int\limits_{\R^d} \left( e^{irz(f(x)-g(x))} -1\right)dx\,\nu(dr)\right|.
\end{align*}
Using that
\begin{align}
    &\left|\,\,\int\limits_{|r|>R}\int\limits_{\R^d} \left( e^{irz(f(x)-g(x))} -1\right)dx\,\nu(dr)\right|\label{eqt2}\\
    &\le \int\limits_{|r|>R} \left( |r| \int\limits_{|r(f(x)-g(x))|\le 1} |z| |f(x)-g(x)|dx +  \int\limits_{|r(f(x)-g(x))|> 1} 2dx \right)\nu(dr)\nonumber\\
    &\le (|z|+2) \int\limits_{|r|>R} |r| \int\limits_0^{1/|r|} d_{f-g}(\alpha)d\alpha \nu(dr)\nonumber,  
\end{align}
similar to (\ref{lstterm}), and $d_{f-g}(\alpha)\le d_f(\frac{\alpha}{2})+d_g(\frac{\alpha}{2})$ (see [\ref{Grafakos}, Proposition 1.1.3]), we hence obtain with [\ref{Grafakos}, Proposition 1.1.4]
\begin{align*}
     &|\psi_{f-g}(z)|\\
     &\le 
    \,\frac{1}{2}|z|^2\int\limits_{|r|\le 1} r^2\nu(dr)  \|f-g\|_{L^2}^2 + \frac{a}{2}|z|^2 \|f-g\|_{L^2}^2 + |\gamma||z|\,\|f-g\|_{L^1}\\
    &\,\,+ \,|z|\,\|f-g\|_{L^1}\int\limits_{1<|r|\le R} |r|\nu(dr) + (|z|+2)\int\limits_{|r|>R}|r|\int\limits_{0}^{1/|r|}\left( d_{f}\left(\frac{\alpha}{2}\right)+d_{g}\left(\frac{\alpha}{2}\right) \right)d\alpha\,\nu(dr)\\
    &\le (1+\max(|z|^2,|z|)) I_R(f-g),
\end{align*}
since $\int\limits_0^{1/|r|} d_f(\frac{\alpha}{2})d\alpha \le 2 \int\limits_0^{1/|r|} d_f(\alpha)d\alpha $. Using 
\begin{align*}
    7\,\frac{\delta}{2} \int\limits_{-1/\delta}^{1/\delta} (1+\max(|z|^2,|z|)) dz \le 7\left(1+\frac{1}{\delta^2}\right) \quad \textrm{ for } \delta\in (0,1],
\end{align*}
we obtain from (\ref{KyFanEst})
\begin{align*}
    \alpha(X,Y)&\le  \inf\bigg\{ \delta\in [0,1]: 7\cdot I_R(f-g) \le \frac{\delta^3}{(\delta^2+1)} \bigg\}\\
    &\le \inf\bigg\{\delta\in (0,1]:14\cdot I_R(f-g)\le \delta^3\bigg\}
\end{align*}
and hence $\alpha(X,Y) \le\min\left( 1,(14\cdot I_R(f-g))^{\frac{1}{3}}  \right)$, finishing the proof.
\end{proof}

\begin{remark}
By assuming that the L\'evy basis $L$ has finite first moment, i.e. $\int_{|r|>1} |r| \,\nu(dr)<~\infty$, the term
$\int_{|r|>R} |r| \int_0^{1/|r|} d_{f-g}(\alpha)d\alpha \nu(dr) $ in (\ref{eqt2}) can be estimated by
\begin{align*}
    \int\limits_{|r|>1} |r| \int\limits_0^{\infty} d_{f-g}(\alpha)d\alpha \nu(dr) = \|f-g\|_{L^1} \int\limits_{|r|>1} |r| \nu(dr),
\end{align*}
hence we also obtain the estimate 
\begin{align*}
    \alpha(X,Y)\le\min\left( 1,(14\cdot \tilde{I}_R(f-g))^{\frac{1}{3}}  \right),
\end{align*}
for every $R>1$, where 
\begin{align*}
    \tilde{I}_{R}(f-g):=&  \frac{1}{2}\|f-g\|_{L^2}^2\int\limits_{|r|\le 1} r^2\nu(dr)  + \frac{a}{2} \|f-g\|_{L^2}^2 + |\gamma|\,\|f-g\|_{L^1}\\
    &+ \,\|f-g\|_{L^1}\int\limits_{1<|r|\le R} |r|\nu(dr) +2 \,\|f-g\|_{L^1} \int\limits_{|r|>1} |r| \nu(dr)
\end{align*}
for $f,g\in D(L)\cap L^1(\R^d,\R)\cap L^2(\R^d,\R)$.
\end{remark}

As we have proven an estimate for the Ky-Fan metric, we can now give a condition when stochastic integrals are indeed almost periodic in probability.

\begin{theorem}\label{a1} 
Let $f:\R\times \R^d\to\R$ be a measurable function with $f(t,\cdot)\in L^1(\R^d,\R)\cap L^2(\R^d,\R)$ such that the function $T_f:\R\to L^1(\R^d,\R)\cap L^2(\R^d,\R)$ given by $T_f(t):=f(t,\cdot)$ is continuous in $L^1(\R^d,\R)$ and $L^2(\R^d,\R)$.\,Furthermore, let $L$ be a L\'evy basis with characteristic triplet $(a,\gamma,\nu)$ such that 
\begin{align}
    \int\limits_{|r|>1} |r| \sup\limits_{t\in\R} \int\limits_{0}^{1/|r|} d_{f(t,\cdot)}\left(\alpha\right)\,d\alpha \,\nu(dr)< \infty\label{Lassump}.
\end{align}
Then $X_t:=\int_{\bR^d}f(t,s)dL(s)$ is almost periodic in probability if for every $\varepsilon>0$ there exist $L_{\varepsilon}$ and $\tau\in[a,a+L_{\varepsilon}]$ for all $a\in\R$ such that
\begin{align}
    \sup\limits_{t\in\R}\|f(t,\cdot)-f(t+\tau,\cdot)\|_{L^1}+\sup\limits_{t\in\R}\|f(t,\cdot)-f(t+\tau,\cdot)\|_{L^2} <\varepsilon.\label{apinLp}
\end{align}
\end{theorem}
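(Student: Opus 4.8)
The plan is to bound the Ky-Fan distance $\alpha(X_t,X_{t+\tau})$ uniformly in $t\in\R$ by means of Lemma~\ref{d1Lemma}(ii), and then to read off almost periodicity in probability from the characterization recalled above (stochastic continuity together with the translation-number condition $P(|X_t-X_{t+\tau}|>\eta)\le\varepsilon$).

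First I would dispose of the preliminaries. Exactly as in the proof of Corollary~\ref{a10}, [\ref{BergerMohamed}, Proposition~3] gives that $f(t,\cdot)\in D(L)$ for every $t$, so that $X_t$ is well defined, and that $(X_t)_{t\in\R}$ is stochastically continuous, which is one half of the definition. Continuity of $T_f$ in $L^1(\R^d,\R)$ and in $L^2(\R^d,\R)$ makes $\|T_f(\cdot)\|_{L^p}$ bounded on compact intervals, and combined with (\ref{apinLp}) --- which forces the orbit $\{T_f(t):t\in\R\}$ to be bounded --- this yields $M_p:=\sup_{t\in\R}\|f(t,\cdot)\|_{L^p}<\infty$ for $p\in\{1,2\}$. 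Since $D(L)$ is a vector space, $f(t,\cdot)-f(t+\tau,\cdot)\in D(L)\cap L^1(\R^d,\R)\cap L^2(\R^d,\R)$, so Lemma~\ref{d1Lemma}(ii) applies to the pair $(f(t,\cdot),f(t+\tau,\cdot))$, giving for every $R>1$
\[
\alpha(X_t,X_{t+\tau})\le\min\Bigl(1,\bigl(14\,I_R(f(t,\cdot)-f(t+\tau,\cdot))\bigr)^{1/3}\Bigr),\qquad t\in\R.
\]

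Next I would analyse $I_R$. The first four summands in its definition are each bounded by a constant $c_R$ --- depending only on $a,\gamma,\nu,R,M_1,M_2$ --- times $\|f(t,\cdot)-f(t+\tau,\cdot)\|_{L^1}+\|f(t,\cdot)-f(t+\tau,\cdot)\|_{L^2}$, which by (\ref{apinLp}) can be made smaller than any prescribed $\varepsilon'>0$ for all $t$ simultaneously. For the last summand, using $\sup_{s\in\R}\int_0^{1/|r|}d_{f(s+\tau,\cdot)}(\alpha)\,d\alpha=\sup_{s\in\R}\int_0^{1/|r|}d_{f(s,\cdot)}(\alpha)\,d\alpha$, one has
\[
\int_{|r|>R}|r|\int_0^{1/|r|}\bigl(d_{f(t,\cdot)}(\alpha)+d_{f(t+\tau,\cdot)}(\alpha)\bigr)\,d\alpha\,\nu(dr)\le 2\int_{|r|>R}|r|\sup_{s\in\R}\int_0^{1/|r|}d_{f(s,\cdot)}(\alpha)\,d\alpha\,\nu(dr),
\]
and by assumption (\ref{Lassump}) together with dominated convergence the right-hand side tends to $0$ as $R\to\infty$, uniformly in $t$ and $\tau$. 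Then, given $\varepsilon,\eta>0$, I would set $\rho:=\min(1,\varepsilon,\eta)$, first pick $R$ large enough that $14$ times the last summand of $I_R$ stays below $\rho^3/2$ for all $t,\tau$, and then invoke (\ref{apinLp}) with $\varepsilon'<\rho^3/(28\,c_R)$ to obtain a length $L$ such that every interval of that length contains a $\tau$ with $14\,I_R(f(t,\cdot)-f(t+\tau,\cdot))<\rho^3$ for all $t$; hence $\alpha(X_t,X_{t+\tau})<\rho$, and so $P(|X_t-X_{t+\tau}|>\eta)\le P(|X_t-X_{t+\tau}|>\rho)\le\rho\le\varepsilon$ for all $t$, which is the desired criterion.

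The hard part is the uniformity in $t$: the first four terms of $I_R$ are controlled uniformly by (\ref{apinLp}) and the finiteness of $M_1,M_2$, whereas the tail term is controlled uniformly precisely because (\ref{Lassump}) keeps the supremum over $t$ inside the $\nu$-integral, so a single $R$ works for all $t$ and all admissible $\tau$ at once; apart from that, the argument is just bookkeeping with the constants coming out of Lemma~\ref{d1Lemma}(ii) and the elementary fact that $\alpha(X,Y)<\rho$ implies $P(|X-Y|>\rho)\le\rho$.
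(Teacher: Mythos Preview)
Your proposal is correct and follows essentially the same route as the paper's proof: invoke [\ref{BergerMohamed}, Proposition~3] for well-definedness and stochastic continuity, then apply Lemma~\ref{d1Lemma}(ii), making the tail term of $I_R$ small by choosing $R$ large via assumption~(\ref{Lassump}) and the remaining terms small via~(\ref{apinLp}). The paper states this in two sentences; you have merely spelled out the bookkeeping with the constants and the passage from the Ky-Fan bound to the $P(|X_t-X_{t+\tau}|>\eta)\le\varepsilon$ criterion, which is exactly what is behind the paper's phrase ``gives a uniform estimate by taking the supremum in $t\in\R$ and choosing $R>1$ large enough''.
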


\begin{proof}
From [\ref{BergerMohamed}, Proposition 3] it follows that $(X_t)_{t\in\R}$ is in the domain of $L$ and that $(X_t)_{t\in\R}$ is stochastically continuous.\,Using the inequality in Lemma \ref{d1Lemma} (ii) gives a uniform estimate by taking the supremum in $t\in \R$ and choosing $R>1$ large enough such that $\int_{|r|>R} |r| \sup_{t\in\R} \int_{0}^{1/|r|} d_{f(t,\cdot)}\left(\alpha\right)\,d\alpha \,\nu(dr)$ becomes sufficiently small.\,This implies that $(X_t)_{t\in\R}$ is almost periodic in probability.
\end{proof}

In the next example we will construct functions satisfying the conditions of Theorem \ref{a1}. For $a,b\in \R$, $a\vee b$ denotes the maximum of $a$ and $b$.  

\begin{example}
Let $L$ be a L\'evy basis on $\R$ with characteristic triplet $(a,\gamma,\nu)$, $g\in L^1(\R,\R)\cap L^2(\R,\R)$ be a function satisfying
\begin{align*}
    \int\limits_{|r|>1} |r|\int\limits_0^{\frac{1}{|r|}}d_g\left(\alpha\right)d\alpha\nu(dr)<\infty
\end{align*}
and let $u:\bR\to\bR$ be an almost periodic function.\,In the examples below the stochastic continuity of $(X_t)_{t\in\R}$ follows directly from the continuity of the almost periodic function $u$ (see [\ref{BergerMohamed}, Proposition 3 ii)]).  
\begin{itemize}
\item[(i)] Assume that $f(t,x)=u(t)g(x)$. We observe that
$\{x\in\bR: |u(t)g(x)|>\alpha\}\subset\{x\in\bR:|g(x)|>\alpha/(1\vee\|u\|_\infty)\}$ and obtain
\begin{align*}
    \int\limits_{0}^{1/|r|}d_{u(t)g}(\alpha)d\alpha\le\int\limits_{0}^{1/|r|}d_{g}(\alpha/(1\vee\|u\|_\infty))d\alpha\le (1\vee\|u\|_\infty) \int\limits_0^{1/|r|}d_{g}(\alpha)d\alpha,
\end{align*}
implying that condition \eqref{Lassump} is satisfied. Moreover, it is clear that $$\|f(t,\cdot)-f(t+\tau,\cdot)\|_{L^p}=|u(t)-u(t+\tau)|\cdot\|g\|_{L^p}$$ for $p=1,2$, so that almost periodicity of $u$ implies (\ref{apinLp}).
\item[(ii)] Assume that $g$ is differentiable satisfying $g'\in L^1(\bR,\R)\cap L^2(\bR,\R)$, then $X_t:=~\int_{\bR}~g(u(t)+~x)dL(x)$ is almost periodic in probability, as $d_{g(u(t)+\cdot)}=d_g$, hence condition \eqref{Lassump} is satisfied and by Jensen's inequality
\begin{align*}
    \int\limits_{\bR} |g(u(t)+x)-g(u(t+\tau)+x)|^pdx \le& \int\limits_{\R} \left( \int\limits_{u(t)}^{u(t+\tau)} |g'(z+x)|dz\right)^pdx  \\\le& |u(t)-u(t+\tau)|^{p-1}\int\limits_{\bR}\left| \int\limits_{u(t)}^{u(t+\tau)} |g'(z+x)|^pdz\right|dx\\
    =&\|g'\|_{L^p}^p|u(t)-u(t+\tau)|^p
\end{align*}
for $p=1,\,2$.
\item[(iii)] Assume that $g$ is differentiable and for $\omega(x):=xg'(x)$ assume that
\begin{align*}
    \|\omega\|_{L^1}+\|\omega\|_{L^2}<\infty.
\end{align*}
Furthermore, let $\varepsilon>0$ and $u:\R\to\R$ be an almost periodic function such that 
\begin{align*}
    |u(t)|>\varepsilon,\quad \textrm{ for all } t\in\R.
\end{align*}
We show that $(X_t)_{t\in\R}$ defined by $X_t=\int_{\R} g(u(t)x)dL(x)$ is almost periodic in probability. To see this, observe
\begin{align*}
    \{x\in\bR: g(u(t) x)>\alpha\}=\left\{\frac{1}{u(t)}x:g(x)>\alpha\right\}=\frac{1}{u(t)}\{x:g(x)>\alpha\}
\end{align*}
and hence
\begin{align*}
    d_{g(u(t)\cdot)}=\frac{1}{u(t)} d_g\le \frac{1}{\varepsilon}d_g.
\end{align*}
Therefore, we obtain that the set $(g(u(t)\cdot))_{t\in\bR}$ satisfies condition \eqref{Lassump}.\,Assume w.l.o.g. that $u(t+\tau)>u(t)$ and calculate for $p=1,\,2$
\begin{align*}
    &\int\limits_{\R} |g(u(t)x) - g(u(t+\tau)x)|^p \, dx \\
    &=  \int\limits_{\R} \left|\, \int\limits_{[u(t),u(t+\tau)]} x g'(zx) \,dz \right|^p\,dx  \\&\le|u(t)-u(t+\tau)|^{p-1} \int\limits_{[u(t),u(t+\tau)]} \int\limits_{\R} \left|x g'(zx)\right|^p \,dx\,dz\\
    &\le |u(t)-u(t+\tau)|^{p-1}\int\limits_{[u(t),u(t+\tau)]}\frac{1}{z^{p+1}} \int\limits_{\R} \left|y g'(y)\right|^p \,dy\,dz\\
    &=|u(t)-u(t+\tau)|^{p-1}\left|\frac{1}{u(t)^p}-\frac{1}{u(t+\tau)^p}\right|\|\omega\|_{L^p(\mathbb{R})}^p\\
    &\le\frac{\|\omega\|_{L^p(\mathbb{R})}^p}{\varepsilon^{2p}}|u(t)-u(t+\tau)|^{p-1}\left|u(t+\tau)^p-u(t)^p\right|\\
    &\le 2p\frac{\|\omega\|_{L^p(\mathbb{R})}^p\|u\|_\infty^{p-1}}{\varepsilon^{2p}}|u(t)-u(t+\tau)|^{p}
    \end{align*}
and obtain that $X_t=\int_{\R} g(u(t) x) \,dL(x)$ is almost periodic in probability.
\item[(iv)] In this example, we will see that we can omit the assumption on $g$ being differentiable in (ii) and (iii) above. Given $g\in L^1(\R,\R)\cap L^2(\R,\R)$ by the denseness of $C^\infty_c(\bR)$ there exists an approximating sequence of infinitely differentiable functions with compact support $(g_n)_{n\in\N}\subset C_c^{\infty}(\R)$ such that $\|g-g_n\|_{L^1} <\varepsilon/2$ and $\|g-g_n\|_{L^2} <\varepsilon/2$ for a fixed $n$ large enough.\,Let $h:\bR\times\bR\to \bR$ be measurable. For $p=1,2$, we have 
\begin{align*}
    &\|g(h(t,\cdot))-g(h(t+\tau,\cdot))\|_{L^p}\\
    &\le  \,2\sup_{t\in \bR} \,\|g(h(t,\cdot))-g_n(h(t,\cdot))\|_{L^p}+  \|g_n(h(t+\tau,\cdot))-g_n(h(t,\cdot))\|_{L^p} 
\end{align*}
and observe for the examples (ii) and (iii) above, i.e. $h(t,x)=u(t)+x$ and $h(t,x)=u(t)x$ that the second term can be estimated with the same calculations as above, whereas now it is sufficient to assume $g\in L^1(\R,\R)\cap L^2(\R,\R)$ to obtain that $X_t=\int\limits_{\R} g(h(t, x)) \,dL(x)$ is almost periodic in probability.\,This holds since
\begin{align*}
    \sup_{t\in\bR}\|g(u(t)+\cdot)\|_{L^p}=\|g\|_{L^p}
\end{align*}
and
\begin{align*}
    \sup_{t\in\bR}\|g(u(t)\cdot)\|_{L^p}=\sup_{t\in\bR}\frac{1}{|u(t)|^{1/p}} \|g\|_{L^p}\le \frac{1}{\varepsilon^{1/p}}\|g\|_{L^p}.
\end{align*} 
Further examples such as $g(h(t,x))=\mathds{1}_{[0,a(t)]}(x)$, where $a:\R\to (0,\infty)$ is almost periodic with $\inf\limits_{t\in\R} a(t)>0$, are now also covered. To see this, notice that $$g(h(t,x))= \mathds{1}_{[0,a(t)]}(x) = \mathds{1}_{[0,1]}(u(t)x),$$ where $u(t):=\frac{1}{a(t)}$ clearly is an almost periodic function.
\end{itemize}
\end{example}

\section{almost periodic stationary ornstein-uhlenbeck-type processes}
In this section we obtain conditions for a L\'evy-driven Ornstein-Uhlenbeck process stochastic differential equation to have an almost periodic stationary solution.\,A stochastic process $(L_t)_{t\in\R}$ with values in $\R$ is called a two-sided L\'evy process if $L_0=0$ a.s., has independent and stationary increments and almost surely c\`adl\`ag paths (i.e. right-continuous paths with finite left limits). A two sided L\'evy process $L$  with characteristic triplet $(a,\gamma,\nu)$ can be constructed by two independent L\'evy processes $L',\,L''$ with same characteristic triplets and $L_t:=L'_{t\vee 0}-L''_{(-t\vee 0)-}$ for $t\in\mathbb{R}$. It naturally induces a L\'evy basis $(L(A))_{A\in \mathcal{B}_b(\R)}$ on $\R$ via $L(A)=\int_{\R} \mathds{1}_A(s)\,dL_s$ for $A\in \mathcal{B}_b(\R)$.\,An Ornstein-Uhlenbeck process $X^{(\mu)}=(X^{(\mu)}_t)_{t\in\bR}$ driven by a L\'evy process $L$ is a solution of the stochastic differential equation $dX_t=\mu X_t dt+dL_t$, where $\mu\in \bR$. As an extension of this model Alkadour studied in [\ref{Alkadour}] a periodic version of such a process. He looked at a stochastic differential equation of the form $dX_t=\mu(t) X_t dt+dL_t$, with a periodic function $\mu $. Let us consider an Ornstein-Uhlenbeck-type process with almost periodic coefficient $\mu:\R\to\R$, which solves the stochastic differential equation
\begin{align}
    dX_t&=\mu(t)X_tdt+dL_t,\label{eq1}\\
    X_0&=X(0),\nonumber
\end{align}
where $L$ is a two-sided real-valued L\'evy process and $X(0)$ some initial random variable. By (\ref{eq1}) we mean that $X_{t_2}-X_{t_1} = \int_{t_1}^{t_2} \mu(s)X_s\,ds + L_{t_2} - L_{t_1}$ is satisfied whenever $t_1<t_2$. The solution of (\ref{eq1}) is given in the next result:



\begin{proposition}\label{PropositionOU}
Let $X(0)$ be given and let $\mu:\R\to\R$ be an almost periodic function. Then the unique solution of (\ref{eq1}) is given by
\begin{align}
X_t= e^{Z_t}\left(X(0)+\int\limits_{0}^t e^{-Z_s}  dL(s)\right),\quad t\in\R,\label{solOUP}
\end{align}
where $Z_t:= \int_0^t \mu(s)ds$ with the interpretation $\int_0^t = -\int_t^0$ for $t<0$.
\end{proposition}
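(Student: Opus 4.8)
The plan is to verify directly that the process defined by \eqref{solOUP} solves the stochastic differential equation \eqref{eq1}, and then to prove uniqueness by showing that the difference of any two solutions satisfies a deterministic linear ODE with zero initial condition. Since $L$ is a two-sided L\'evy process, the integral $\int_0^t e^{-Z_s}\,dL(s)$ is a well-defined semimartingale integral (the integrand $s\mapsto e^{-Z_s}$ is continuous, hence locally bounded and predictable), and $t\mapsto Z_t$ is a $C^1$ function of bounded variation on compacts because $\mu$ is continuous (being almost periodic); in particular $Z$ has no martingale part and no jumps.

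First I would apply the integration-by-parts formula for semimartingales to the product $X_t = e^{Z_t}\,U_t$, where $U_t := X(0) + \int_0^t e^{-Z_s}\,dL(s)$. Since $e^{Z_t}$ is continuous of finite variation with $d(e^{Z_t}) = \mu(t)e^{Z_t}\,dt$, the cross-variation term $[e^{Z_\cdot}, U]_t$ vanishes, and we obtain
\begin{align*}
    dX_t = e^{Z_t}\,dU_t + U_t\,d(e^{Z_t}) = e^{Z_t}\cdot e^{-Z_t}\,dL(t) + U_t\,\mu(t)e^{Z_t}\,dt = dL_t + \mu(t)X_t\,dt,
\end{align*}
which is precisely \eqref{eq1}; moreover $X_0 = e^{Z_0}U_0 = X(0)$ since $Z_0 = 0$. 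Rewriting this in the integrated form $X_{t_2} - X_{t_1} = \int_{t_1}^{t_2}\mu(s)X_s\,ds + L_{t_2} - L_{t_1}$ for $t_1 < t_2$ (and checking the sign conventions for $t < 0$ via $\int_0^t = -\int_t^0$) confirms that \eqref{solOUP} is a solution in the sense defined after \eqref{eq1}.

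For uniqueness, suppose $X$ and $\tilde X$ both solve \eqref{eq1} with the same initial value $X(0)$, and set $D_t := X_t - \tilde X_t$. Subtracting the two integrated equations, the L\'evy increments cancel and we get $D_{t_2} - D_{t_1} = \int_{t_1}^{t_2}\mu(s)D_s\,ds$ for all $t_1 < t_2$, with $D_0 = 0$. Thus, pathwise, $t\mapsto D_t$ is an absolutely continuous function solving the linear ODE $D_t' = \mu(t)D_t$ with $D_0 = 0$; by Gr\"onwall's inequality (using that $\mu$ is bounded on compacts, again since it is almost periodic hence continuous and bounded) we conclude $D_t \equiv 0$, i.e. $X = \tilde X$ up to indistinguishability.

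The main obstacle is somewhat pedestrian rather than deep: one must be careful with the precise sense in which \eqref{eq1} is to be understood (it is stated as an integral identity for $t_1 < t_2$, not as an It\^o differential), and in particular with the two-sided nature of the driving process and the convention $\int_0^t = -\int_t^0$ for $t < 0$, so that the integration-by-parts computation and the verification are valid on all of $\R$ and not just on $[0,\infty)$. One should also make explicit why $s\mapsto e^{-Z_s}$ lies in the domain of integration against $L$ — but this is immediate from continuity and local boundedness, and no moment assumptions on $L$ are needed since the integral is taken in the semimartingale (It\^o) sense over bounded time intervals.
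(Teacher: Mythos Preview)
Your proof is correct and complete, but takes a different route from the paper's. The paper does not verify existence and uniqueness from scratch; instead it invokes an external result of Jaschke on inhomogeneous linear stochastic differential equations, which asserts that any solution of \eqref{eq1} must satisfy the flow relation
\[
X_{t_2} = e^{Z_{t_2}-Z_{t_1}}\Bigl(X_{t_1} + \int_{t_1}^{t_2} e^{-(Z_s-Z_{t_1})}\,dL(s)\Bigr)
\]
for all $t_1\le t_2$, and conversely that any process satisfying this relation solves \eqref{eq1}. Uniqueness is then read off by specialising $(t_1,t_2)=(0,t)$ or $(t,0)$, and existence by checking that the candidate \eqref{solOUP} obeys the flow relation. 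Your approach is more elementary and entirely self-contained: you establish existence via semimartingale integration by parts (exploiting that $e^{Z_\cdot}$ is continuous of finite variation so the bracket term vanishes), and uniqueness by reducing the difference of two solutions to a deterministic linear ODE handled by Gr\"onwall. The paper's argument is shorter on the page but relies on the cited theorem doing the real work; yours makes the mechanism transparent, and in particular makes explicit why no moment conditions on $L$ are required. One minor point: in the integration-by-parts step you should strictly write $U_{t-}\,d(e^{Z_t})$ rather than $U_t\,d(e^{Z_t})$, but since this is integrated against $dt$ and $U$ has at most countably many jumps, the distinction is immaterial.
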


\begin{proof}
It follows from [\ref{Jaschke}, Theorem 1] that a solution to (\ref{eq1}) necessarily satisfies
\begin{align}
    X_{t_2} = e^{Z_{t_2} - Z_{t_1}}\left( X_{t_1} + \int\limits_{t_1}^{t_2} e^{-(Z_s-Z_{t_1})} dL(s) \right)\label{OUN}
\end{align}
for all $t_1\le t_2$. Choosing $(t_1,t_2)=(0,t)$ when $t\ge 0$ and $(t_1,t_2)=(t,0)$ when $t\le 0$ gives uniqueness.\,On the other hand, defining $X_t$ by (\ref{solOUP}), it is easily seen that $X$ satisfies (\ref{OUN}), and again by [\ref{Jaschke}, Theorem 1] we see that $X$ is a solution of (\ref{eq1}).
\end{proof}

The proof of Proposition \ref{PropositionOU} actually shows that it is enough to assume that the function $\mu:\R\to\R$ is c\`adl\`ag. Now let $\mu$ be an almost periodic function. The question now is if there exists an almost periodic stationary version of this process and if this solution is unique, i.e. if $X(0)$ can be chosen such that $(X_t)_{t\in \R}$ becomes almost periodic stationary, and if this choice of $X(0)$ is unique.\,The related question in the case when $\mu<0$ is constant (leading to the classical Ornstein-Uhlenbeck process and stationary solutions) is well-studied, and it is well-known that then stationary solutions exist if and only if $\int_{|s|>1} \log(|s|)\,\nu(ds)<\infty$, see e.g. [\ref{Jacod2}], [\ref{Sato2}], [\ref{Sato3}] or [\ref{Wolfe}]. Similarly, Alkadour studied in [\ref{Alkadour}] periodic stationary solutions when $t\mapsto \mu(t)$ is periodic. Returning to the question of almost periodic stationary solutions when $\mu$ is almost periodic, the usual candidate is a process of the form
\begin{align}\label{eq2}
X_t:= \int\limits_{\R} f(t,s) dL(s), 
\end{align}
where $f:\R\times\R\to \R$ is given by 
\begin{align}
    f(t,s):= \exp\left(\int\limits_{s}^t \mu(u) \,du \right)\mathds{1}_{(-\infty,t]}(s).\label{kernelOU}
\end{align}
With $Z$ of Proposition \ref{PropositionOU}, this is written as $X_t = \int_{-\infty}^t e^{Z_t-Z_s} dL(s)$. 
Let us show that $X$ is well-defined and almost periodic stationary under some additional conditions on $\mu$ and $L$. Since $\mu$ is almost periodic, the limit $\lim_{T\to\infty} \frac{1}{2T} \int_{-T}^T \mu(s)\,ds$ exists in $\R$ (e.g. [\ref{Levitan}, Section 2.3].

\begin{theorem}\label{OrnTheorem1}
Let $\mu:\bR\to\bR$ be an almost periodic function such that $C:=-\lim_{T\to \infty}\frac{1}{2T}\int_{-T}^{T}\mu(s)ds>0$ and let $f$ be defined as in (\ref{kernelOU}). If the L\'evy basis $L$ with characteristic triplet $(a,\gamma,\nu)$ satisfies
\begin{align}
     \int\limits_{|s|>1} \log(|s|) \,\nu(ds)<\infty,\label{logmomentOU}
\end{align}
then $f(t,\cdot)\in D(L)$, and the process $X=(X_t)_{t\in\R}$ defined by (\ref{eq2}) is the unique almost periodic stationary solution of (\ref{eq1}).  
\end{theorem}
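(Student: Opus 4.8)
The plan is to verify the hypotheses of Corollary \ref{a10} for the kernel $f$ in (\ref{kernelOU}) with the choice $s_1(\tau)=s_2(\tau)=\tau$ flagged in the remark following that corollary, to identify the resulting process as a solution of (\ref{eq1}) via Proposition \ref{PropositionOU}, and then to settle uniqueness by a tightness argument.

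First I would record decay estimates for $f$. Since $\mu$ is almost periodic with mean value $-C<0$, the averages $\frac1h\int_a^{a+h}\mu(u)\,du$ converge to $-C$ uniformly in $a$ as $h\to\infty$; hence there is an $H>0$ with $\int_s^t\mu(u)\,du<-\frac C2(t-s)$ whenever $t-s\ge H$, while always $\int_s^t\mu(u)\,du\le\|\mu\|_\infty(t-s)$. Thus $f(t,s)\le e^{-\frac C2(t-s)}$ for $t-s\ge H$, $f(t,s)\le e^{\|\mu\|_\infty H}$ for $0\le t-s\le H$, and $f(t,s)=0$ for $s>t$, all uniformly in $t$. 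From these one obtains $\sup_{t\in\R}\|f(t,\cdot)\|_{L^p}<\infty$ for $p=1,2$, the bound $d_{f(t,\cdot)}(\alpha)\le\frac2C\log_+(1/\alpha)+H$ uniformly in $t$, and, after interchanging integrals,
\begin{align*}
  \int\limits_{|r|>1}|r|\sup_{t\in\R}\int\limits_0^{1/|r|}d_{f(t,\cdot)}(\alpha)\,d\alpha\,\nu(dr)\le\frac2C\int\limits_{|r|>1}\log|r|\,\nu(dr)+\Big(\frac2C+H\Big)\nu(\{|r|>1\})<\infty
\end{align*}
by the log-moment assumption (\ref{logmomentOU}); in particular $f(t,\cdot)\in B(L)\cap L^1(\R,\R)\cap L^2(\R,\R)\subseteq D(L)$ for every $t$ (recall $B(L)\cap L^1\cap L^2\subseteq D(L)$). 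Continuity of $t\mapsto f(t,\cdot)$ in $L^1$ and $L^2$ follows from the pointwise convergence and the exponential majorant via dominated convergence.

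The heart of the proof is the translation condition (\ref{apstationarylp}). Using $f(t+\tau,s+\tau)=\exp(\int_s^t\mu(u+\tau)\,du)\mathds{1}_{(-\infty,t]}(s)$ one writes, for $s\le t$,
\begin{align*}
  |f(t,s)-f(t+\tau,s+\tau)|=e^{\int_s^t\mu(u)\,du}\,\big|1-e^{W^\tau_{s,t}}\big|,\qquad W^\tau_{s,t}:=\int\limits_s^t(\mu(u+\tau)-\mu(u))\,du.
\end{align*}
If $\tau$ is an $\eta$-almost period of $\mu$, then $|W^\tau_{s,t}|\le\eta(t-s)$, so $|1-e^{W^\tau_{s,t}}|\le\eta(t-s)e^{\eta(t-s)}$; choosing $\eta\le C/4$ and combining with $e^{\int_s^t\mu\,du}\le e^{-\frac C2(t-s)}$ for $t-s\ge H$ gives the integrable majorant $\eta(t-s)e^{-\frac C4(t-s)}$ there, while on $\{0\le t-s\le H\}$ one has the crude bound $e^{\|\mu\|_\infty H}\eta He^{\eta H}$. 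Integrating in $s$ yields $\sup_t\|f(t,\cdot)-f(t+\tau,\cdot+\tau)\|_{L^1}\le K_1\eta$ and $\sup_t\|f(t,\cdot)-f(t+\tau,\cdot+\tau)\|_{L^2}\le K_2\eta$ for constants $K_1,K_2$ depending only on $\mu$ and $C$. Given $\varepsilon>0$, pick $\eta\le\min(1,C/4,\varepsilon/(K_1+K_2))$; since the $\eta$-almost periods of $\mu$ are relatively dense, letting $L_\varepsilon$ be the corresponding inclusion length and $s_1(\tau)=s_2(\tau)=\tau$ verifies (\ref{apstationarylp}). Corollary \ref{a10} then shows that $X_t=\int_\R f(t,s)\,dL(s)$ is almost periodic stationary.

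Finally, write $X_0=\int_{-\infty}^0 e^{-Z_s}\,dL(s)$ (well defined since $f(0,\cdot)\in D(L)$). Splitting $f(t,\cdot)=e^{Z_t}f(0,\cdot)+f(t,\cdot)\mathds{1}_{(0,t]}$ — the second summand being bounded with compact support, hence in $D(L)$ — and using linearity of the integral gives $X_t=e^{Z_t}(X_0+\int_0^t e^{-Z_s}\,dL(s))$, which by Proposition \ref{PropositionOU} is precisely the solution of (\ref{eq1}) with this initial value. For uniqueness, let $Y$ be any almost periodic stationary solution of (\ref{eq1}); applying (\ref{OUN}) on $[0,t]$ and on $[t,0]$ gives $X_t-Y_t=e^{Z_t}(X_0-Y_0)$ for all $t\in\R$. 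Since $\mu$ has mean value $-C$, $Z_t/t\to-C<0$ as $t\to\pm\infty$, so $e^{Z_t}\to\infty$ as $t\to-\infty$; by Proposition \ref{apsProp}~b) the families $(\mathcal{L}(X_t))_{t\in\R}$ and $(\mathcal{L}(Y_t))_{t\in\R}$ are tight, so for each $\delta>0$ there is $M_\delta$ with $P(|X_t-Y_t|>2M_\delta)<2\delta$, i.e.\ $P(|X_0-Y_0|>2M_\delta e^{-Z_t})<2\delta$, for all $t$; letting $t\to-\infty$ along a sequence with $Z_t\uparrow\infty$ yields $P(X_0\ne Y_0)\le2\delta$, hence $X_0=Y_0$ a.s.\ and $X_t=Y_t$ a.s.\ for all $t$. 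The step I expect to be the main obstacle is the translation estimate above: the naive bound on $W^\tau_{s,t}$ grows linearly in $t-s$ and is only controlled by the exponential decay of $f$ furnished by $C>0$, so relative denseness of almost periods and a quantitative mean-value gap must be used together and balanced (hence the need to take $\eta$ small relative to $C$).
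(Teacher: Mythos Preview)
Your proposal is correct and follows essentially the same route as the paper: uniform exponential decay of $f$ from the negative mean value of $\mu$, the distribution-function bound to verify the integral hypothesis of Corollary~\ref{a10}, the translation estimate (\ref{apstationarylp}) with $s_1(\tau)=s_2(\tau)=\tau$, and uniqueness via tightness of almost periodic stationary solutions combined with $e^{Z_t}\to\infty$ as $t\to-\infty$. The only cosmetic differences are that the paper splits the $L^p$-integral for $|f(t,\cdot)-f(t+\tau,\cdot+\tau)|$ at $t-s=1/\varepsilon$ (using $|1-e^x|\le 2|x|$ on the near region and the crude bound $2e^{C(s-t)/2+C'}$ on the far region) rather than your global bound $|1-e^x|\le|x|e^{|x|}$ with $\eta\le C/4$, and that the paper phrases uniqueness by showing directly $Y_0=\int_{-\infty}^0 e^{-Z_s}\,dL(s)$ rather than via $X_t-Y_t=e^{Z_t}(X_0-Y_0)$; your version additionally spells out why $X$ solves (\ref{eq1}), which the paper leaves implicit.
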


\begin{proof}
As $\mu$ is almost periodic, there exists a $T_0>0$ such that 
\begin{align}
\frac{1}{T}\int\limits_{x}^{T+x} \mu(s)ds<-\frac{C}{2}\textrm{ for all }T\ge T_0\textrm{ and } x\in\R,\label{MeanMu}
\end{align}
see [\ref{Levitan}, Section 2.3]. Now let $s<t$ and $s\in (kT_0,(k+1)T_0]$ and $t\in (lT_0,(l+1)T_0]$, where $l,k\in \bZ$. We have
\begin{align}
    \int_s^t\mu(u)du=&\sum\limits_{m=k}^l \int\limits_{mT_0}^{(m+1)T_0}\mu(u)du - \int_{kT_0}^s \mu(u)du- \int_t^{(l+1)T_0}\mu(u)du\nonumber\\
    \le&-(l+1-k)T_0 C/2 +2 T_0 \sup_{x\in\bR}|\mu(x)|\nonumber\\
    \le&-(t-s)C/2+2 T_0 \sup_{x\in\bR}|\mu(x)|=:\frac{1}{2}C(s-t)+C'\label{estimateOU}.
\end{align}
It holds for $\alpha\in (0,1)$ 
\begin{align*}
    d_{f(t,\cdot)}(\alpha) &= \lambda^1\left( \{x\in (-\infty,t]: |f(t,x)|>\alpha\}  \right)\\
    &= \lambda^1\left(\bigg\{ x\in (-\infty,t]: \int\limits_x^t \mu(u) \,du > \log(\alpha)\bigg\}  \right)\\
    &\le  \lambda^1\left(\bigg\{ x\in (-\infty,t]: C(x-t)+2C' > 2\log(\alpha)\bigg\}  \right)\\
    &=  \frac{2\,C'}{C} - \frac{2}{C}\, \log(\alpha),
\end{align*}
and therefore
\begin{align*}
    &\int\limits_{|s|>1} |s|\sup\limits_{t\in\R}\int\limits_{0}^{1/|s|} d_{f(t,\cdot)}(\alpha) \,d\alpha \,\nu(ds) \\
    &\le \frac{2}{C}\,(1+C')\,\nu\left(\R\setminus [-1,1]\right) + \frac{2}{C}   \int\limits_{|s|>1} \log(|s|)\,\nu(ds) <\infty.
\end{align*}
Hence, $X_t$ given by (\ref{eq2}) is indeed well-defined, see [\ref{BergerMohamed}, Proposition 3i)]. 
Let us now show that $X$ defined by (\ref{eq2}) is almost periodic stationary.\,As $\mu$ is almost periodic, for every $\varepsilon>0$ there exists $L_{\varepsilon}$ and $\tau=\tau(a,\varepsilon)\in [a,a+L_{\varepsilon}]$ for all $a\in \R$ such that
\begin{align*}
    |\mu(u)- \mu(u+\tau)|<\varepsilon \textrm{ for all } u\in\R.
\end{align*}
For $|t-s|\le \frac{1}{\varepsilon}$ and $\tau=\tau(a,\varepsilon)$ we get
\begin{align*}
&|f(t+\tau,s+\tau)-f(t,s)|\\
&=\mathds{1}_{(-\infty,t]}(s)\left|\exp\left(\int\limits_{s}^t \mu(u)\,du \right)-\exp\left(\,\,\int\limits_{s+\tau}^{t+\tau} \mu(u)\,du \right)\right|\\
&= \mathds{1}_{(-\infty,t]}(s)\exp\left(\int\limits_{s}^t \mu(u)\,du \right)\left| 1-\exp\left(\int\limits_{s}^t (\mu(u+\tau)-\mu(u) )\,du\right) \right|\\
&\le \mathds{1}_{(-\infty,t]}(s)\exp\left(\int\limits_{s}^t  \mu(u)\,du \right)2\varepsilon (t-s),
\end{align*}
where we used that $|1-e^x|\le 2 |x|$ for all $|x|\le 1$.\,For general $s,t\in \R$ we obtain by the triangular inequality and (\ref{estimateOU})
\begin{align*}
    |f(t+\tau,s+\tau)-f(t,s)|\le 2\,\mathds{1}_{(-\infty,t]}(s) e^{C/2(s-t)+C'}.
\end{align*}
Let $\varepsilon\in (0,1)$. We observe for $p\ge 1$
\begin{align*}
    &\int_{\bR}|f(t+\tau,s+\tau)-f(t,s)|^pds\\
    = &\int_{(-\infty,t-\frac{1}{\varepsilon}]}|f(t+\tau,s+\tau)-f(t,s)|^pds+ \int_{(t-\frac{1}{\varepsilon},t]}|f(t+\tau,s+\tau)-f(t,s)|^pds\\
    \le &2^p\,e^{pC'} \int_{(-\infty,t-\frac{1}{\varepsilon}]} e^{\frac{pC}{2}(s-t)} ds + 2^p \int_{(t-\frac{1}{\varepsilon},t]}\exp\left(p\int\limits_{s}^t  \mu(u)\,du \right)\varepsilon^p(t-s)^p ds\\
    \le & 2^p\,e^{pC'}\int_{(-\infty,-\frac{1}{\varepsilon}]} e^{\frac{pC}{2}s} ds+2^p\varepsilon^p\,e^{pC'} \int_{(-\frac{1}{\varepsilon},0]} e^{p\frac{C}{2}s}  |s|^p ds \\
    \le & \frac{2^{p+1}\,e^{pC'}}{pC}  e^{-\frac{pC}{2\varepsilon}}+2^p\varepsilon^p \,e^{pC'} \int_{(-\infty,0]} e^{p\frac{C}{2} s} |s|^p ds.
\end{align*}
Therefore, the almost periodic stationarity of $X$ follows from Corollary \ref{a10} with $s_1(\tau)=s_2(\tau)=\tau$ (the continuity of $t\mapsto f(t,\cdot)$ in $L^1(\R,\R)\cap L^2(\R,\R)$ is easily seen). To see the uniqueness of the almost periodic stationary solution, let $Y=(Y_t)_{t\in\R}$ be an almost periodic stationary solution. By Proposition~\ref{apsProp} b), $(\mathcal{L}_{Y_t})_{t\in\R}$ is uniformly tight and with [\ref{Kallenberg}, Lemma 4.9] and (\ref{MeanMu}) we get that $e^{-Z_t} Y_t \to 0$ in probability as $t\to -\infty$, where $Z$ is defined as in Proposition \ref{PropositionOU}. Furthermore, it holds $\int^t e^{-Z_s} \, dL_s \to \int_{-\infty}^0 e^{-Z_s}\, dL_s$ in probability as $t\to -\infty$. Hence, by (\ref{solOUP}), $Y_0$ is the probability limit of $e^{-Z_t} Y_t - \int_{0}^{t} e^{-Z_s} \,dL_s$ as $t\to\infty $, which is equal to $\int_{-\infty}^0 e^{-Z_s}\,dL(s)$.\,This shows that $Y_0$, and hence $Y$, are unique.
\end{proof}

\begin{remark}
In Theorem \ref{OrnTheorem1} we saw that if the (asymptotic) mean $\lim\limits_{T\to\infty}\frac{1}{2T}\int_{-T}^T \mu(s)ds$ is negative, it is easy to construct a unique almost periodic stationary solution. In the case that the mean is positive, one can also find by the same methods a unique almost periodic stationary solution, which is then given by $X_t = -\int_{t}^\infty e^{Z_t-Z_s} dL(s)$ under the sufficient and necessary condition (\ref{logmomentOU}). If the mean is $0$, we do not know if this implies that there exists no stationary solution. If $\mu$ is periodic and the mean of $\mu$ is $0$, there exists no stationary periodic solution, see [\ref{Alkadour}].
\end{remark}

\section{Central limit theorem for almost periodic stationary processes}
In this section we prove a central limit theorem for $m$-dependent and $L^2$-uniformly integrable almost periodic stationary processes.\,We start with the following lemma (recall that $\lambda^d$ denotes the $d$-dimensional Lebesgue measure).

\begin{lemma}\label{uniform}
Let $(X_t)_{t\in\R^d}$ be an $L^2$-uniformly integrable and jointly measurable stochastic process. Denote $Y_k := \int_{A_k} X_t\, dt$, where $(A_k)_{k\in\Z}$ is a sequence of Borel measurable sets with $C:=\sup_{k\in\Z}\lambda^d( A_k)<\infty $. Then $(Y_k)_{k\in\Z}$ is $L^2$-uniformly integrable.
\end{lemma}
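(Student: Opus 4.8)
The plan is a truncation argument; joint measurability of the process enters only through Fubini--Tonelli. First I would observe that $L^2$-uniform integrability of $(X_t)_{t\in\R^d}$, applied with $\varepsilon=1$, forces $M:=\sup_{t\in\R^d}\mathbb{E}|X_t|^2<\infty$. Since $(t,\omega)\mapsto|X_t(\omega)|^2$ is then a nonnegative product-measurable function, Tonelli's theorem combined with the Cauchy--Schwarz bound $|Y_k|^2\le\lambda^d(A_k)\int_{A_k}|X_t|^2\,dt\le C\int_{A_k}|X_t|^2\,dt$ gives
\begin{align*}
\mathbb{E}|Y_k|^2\le C\int_{A_k}\mathbb{E}|X_t|^2\,dt\le C^2M\qquad\textrm{for all }k\in\Z,
\end{align*}
so $(Y_k)_{k\in\Z}$ is bounded in $L^2$. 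The same computation with $|X_t|$ in place of $|X_t|^2$ shows $\int_{A_k}|X_t|\,dt<\infty$ almost surely, so each $Y_k$ is a genuine random variable (measurable by Fubini).

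Next, fix $\varepsilon>0$ and split, for a threshold $N$ to be chosen, $X_t=X_t\mathds{1}_{|X_t|\le N}+X_t\mathds{1}_{|X_t|>N}$, which gives the decomposition $Y_k=Y_k^{(N)}+S_k^{(N)}$ with $Y_k^{(N)}:=\int_{A_k}X_t\mathds{1}_{|X_t|\le N}\,dt$ and $S_k^{(N)}:=\int_{A_k}X_t\mathds{1}_{|X_t|>N}\,dt$. The truncated part is bounded deterministically, $|Y_k^{(N)}|\le N\lambda^d(A_k)\le NC$, while the Cauchy--Schwarz/Tonelli estimate above applied to the tail part yields
\begin{align*}
\sup_{k\in\Z}\mathbb{E}|S_k^{(N)}|^2\le C^2\sup_{t\in\R^d}\mathbb{E}|X_t|^2\mathds{1}_{|X_t|>N},
\end{align*}
and I would make the right-hand side $<\varepsilon/4$ by choosing $N$ large, using the $L^2$-uniform integrability of $(X_t)$.

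Finally, with $N$ now fixed, I would recombine: from $|Y_k|^2\le 2|Y_k^{(N)}|^2+2|S_k^{(N)}|^2$, Chebyshev's inequality, and the $L^2$-bound of the first paragraph,
\begin{align*}
\mathbb{E}|Y_k|^2\mathds{1}_{|Y_k|>K}\le 2(NC)^2\,P(|Y_k|>K)+2\,\mathbb{E}|S_k^{(N)}|^2\le\frac{2(NC)^2C^2M}{K^2}+\frac{\varepsilon}{2}\qquad\textrm{for all }k\in\Z,
\end{align*}
and then pick $K$ so large that the first summand is $<\varepsilon/2$; this gives $\sup_{k\in\Z}\mathbb{E}|Y_k|^2\mathds{1}_{|Y_k|>K}<\varepsilon$, which is the claimed $L^2$-uniform integrability. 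I do not expect a genuine obstacle: the points deserving care are the two invocations of Fubini--Tonelli — exactly where the joint-measurability hypothesis is used, and where one should check that the relevant integrands ($|X_t|$, $|X_t|^2$, and their truncations) are indeed product-measurable — together with keeping the constants straight in the final $\varepsilon$-bookkeeping.
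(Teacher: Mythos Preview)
Your proof is correct. Both your argument and the paper's rest on the same two ingredients --- the Cauchy--Schwarz/Jensen bound $Y_k^2\le\lambda^d(A_k)\int_{A_k}X_t^2\,dt$ and a truncation of $X_t$ --- but the execution differs. The paper introduces $Z_k:=\int_{A_k}X_t^2\,dt$, observes that $Y_k^2\mathds{1}_{Y_k^2>\varepsilon}\le\lambda^d(A_k)\,Z_k\mathds{1}_{Z_k>\varepsilon/C}$, and then notes that on $\{Z_k>\varepsilon/C\}$ the contribution of $\{X_t^2\le\varepsilon/(2C^2)\}$ to $Z_k$ is at most half, so $Z_k\mathds{1}_{Z_k>\varepsilon/C}\le 2\int_{A_k}X_t^2\mathds{1}_{X_t^2>\varepsilon/(2C^2)}\,dt$; taking expectations finishes in one line. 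You instead split $Y_k$ itself as $Y_k^{(N)}+S_k^{(N)}$, bound the first piece pointwise by $NC$, control the second in $L^2$ via the uniform integrability of $X$, and close with Chebyshev using the preliminary $L^2$-bound on $Y_k$. The paper's route is a touch slicker (a single threshold, no separate $L^2$-boundedness step or Chebyshev), while yours follows a more modular template and makes the role of each hypothesis very transparent; both are perfectly sound.
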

The joint measurability of $X$ implies that the paths $t\mapsto X_t(\omega)$ are Borel-measurable and the integrals $\int_{A_k} X_t\,dt$ exist almost surely pointwise, since
\begin{align*}
    \mathbb{E}\left( \int\limits_{A_k} |X_t|\,dt \right) = \int\limits_{A_k} \mathbb{E}(|X_t|)\,dt  \le C\, \sup\limits_{s\in\R} \mathbb{E}(|X_s|) <\infty,
\end{align*}
by Tonelli's theorem.~Tonelli's theorem then implies that the $Y_k$, $k\in\Z$, are random variables.
\begin{proof}
First define $Z_k:=\int_{A_k} X_t^2\, dt$ and observe that by Jensen's inequality for $\varepsilon>0$
\begin{align*}
    Y_k^2 \mathds{1}_{Y_k^2>\varepsilon} &\le  Z_k \mathds{1}_{|Z_k|>\frac{\varepsilon}{C}} \lambda^d(A_k) \\
    &= \left( \int\limits_{A_k} X_t^2  \mathds{1}_{ X_t^2 >\frac{\varepsilon}{2C^2}}\, dt +  \int\limits_{A_k} X_t^2 \mathds{1}_{ X_t^2 \le\frac{\varepsilon}{2C^2}}\, dt \right)\mathds{1}_{|Z_k|>\frac{\varepsilon}{C}} \lambda^d(A_k)\\
    &\le \left( \int\limits_{A_k} X_t^2 \mathds{1}_{ X_t^2 >\frac{\varepsilon}{2C^2}}\, dt +  \int\limits_{A_k} X_t^2 \mathds{1}_{ X_t^2 >\frac{\varepsilon}{2C^2}}\, dt \right)\mathds{1}_{|Z_k|>\frac{\varepsilon}{C}} \lambda^d(A_k)\\
    &\le 2 \lambda^d(A_k)\int\limits_{A_k} X_t^2 \mathds{1}_{X_t^2 >\frac{\varepsilon}{2C^2}}\, dt .
\end{align*}
It follows
\begin{align*}
    \mathbb{E} (Y_k^2 \mathds{1}_{Y_k^2>\varepsilon}) &\le 2 \lambda^d(A_k) \int\limits_{A_k} \mathbb{E}X_t^2 \mathds{1}_{X_t^2>\frac{\varepsilon}{2C^2}}\,dt\\
    &\le 2  \lambda^d(A_k) C \sup\limits_{t\in\R^d} \mathbb{E} (X_t^2 \mathds{1}_{X_t^2>\frac{\varepsilon}{2C^2}})\to 0
\end{align*}
for $\varepsilon\to\infty$, since $(X_t^2)_{t\in\R^d}$ is uniformly integrable.
\end{proof}

For $m>0$ a stochastic process $(X_t)_{t\in\R}$ is called $m$-dependent, if $(X_t)_{t\le u}$ and $(X_t)_{t>u+m}$ are independent for all $u\in\R$. In the following, we prove our central limit theorem for $m$-dependent almost periodic stationary processes.
\begin{theorem}\label{mcentral}
Let $m>0$ and $(X_t)_{t\in\R}$ be an $m$-dependent, jointly measurable, mean zero almost periodic stationary process which is $L^2$-uniformly integrable. Then
\begin{align*}
    g(s):= \lim\limits_{T\to\infty } \frac{1}{2T} \int\limits_{-T}^T \mathbb{E}X_{s+t}X_t\,dt
\end{align*}
exists for each $s\in\R$ as a limit in $\R$, the function $g:\R\to\R$ is measurable and bounded, $\int_{-m}^m g(s)\,ds \ge 0$ and 
\begin{align}
    \frac{1}{\sqrt{2T}} \int\limits_{-T}^T X_t\,dt  \overset{d}{\to} N\left( 0, \int\limits_{-m}^m g(s)\,ds \right)\quad \textrm{ as } T\to\infty.\label{convCLT}
\end{align}
\end{theorem}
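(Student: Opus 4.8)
The plan is to reduce the continuous-time CLT to a classical CLT for a stationary (or at least $1$-dependent) sequence of random variables obtained by integrating $X$ over unit blocks, and to use the almost periodic stationarity to make all the relevant limits exist. Fix $T$ large and write $n=\lfloor T\rfloor$; split the interval $[-T,T]$ into the blocks $I_k=[k,k+1)$ for $-n\le k\le n-1$ together with two negligible end pieces of length $<1$. Set $Y_k:=\int_{I_k}X_t\,dt$. The end pieces contribute $o(\sqrt T)$ in $L^2$ by Lemma~\ref{uniform} (applied with $A_k$ the end intervals), so $\frac{1}{\sqrt{2T}}\int_{-T}^TX_t\,dt$ has the same limiting law as $\frac{1}{\sqrt{2n}}\sum_{k=-n}^{n-1}Y_k$. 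Because $X$ is $m$-dependent, the variables $(Y_k)$ are $(\lceil m\rceil+1)$-dependent; grouping $\lceil m\rceil+1$ consecutive $Y_k$'s into a single variable $W_j$ reduces this to a $1$-dependent array, and by a further such step one may assume the block variables are independent within each of two interleaved families (the standard big-block/small-block or Bernstein device), so Lemma~\ref{uniform} again guarantees the Lindeberg condition. The cleanest route, however, is to invoke a CLT for $m$-dependent non-stationary triangular arrays (e.g.\ a Lindeberg–Feller version) directly for $(Y_k)$, for which we must check: (i) uniform integrability of $Y_k^2$, which is exactly Lemma~\ref{uniform}; (ii) mean zero, which follows from $\mathbb E X_t=0$ and Tonelli; and (iii) convergence of the normalized variance.

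The heart of the matter is therefore step (iii): showing that
\begin{align*}
\frac{1}{2T}\,\mathrm{Var}\!\left(\int_{-T}^T X_t\,dt\right)=\frac{1}{2T}\int_{-T}^T\!\!\int_{-T}^T\mathbb E X_sX_t\,ds\,dt\longrightarrow \int_{-m}^m g(s)\,ds,
\end{align*}
and en route that $g(s)=\lim_{T\to\infty}\frac1{2T}\int_{-T}^T\mathbb E X_{s+t}X_t\,dt$ exists for each $s$. For the existence of $g(s)$: by Proposition~\ref{apsProp}~d) (using $L^2$-uniform integrability) the process is almost periodically correlated, so for fixed $s$ the function $t\mapsto C(t):=\mathbb E X_{s+t}X_t$ is almost periodic (it equals $\mathbb E X_tX_{t+(-s)}+\mathbb E X_t\,\mathbb E X_{t+(-s)}$ in the notation of the definition, and the mean term vanishes); every almost periodic function has a mean value $M(C)=\lim_{T\to\infty}\frac1{2T}\int_{-T}^T C(t)\,dt$ (cf.\ [\ref{Levitan}, Section~2.3]), which is precisely $g(s)$. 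Measurability and boundedness of $g$ follow since $|g(s)|\le \sup_t\mathbb E X_t^2=:M<\infty$ (finite by $L^2$-uniform integrability) and $g$ is a pointwise limit of the continuous functions $s\mapsto\frac1{2T}\int_{-T}^T\mathbb E X_{s+t}X_t\,dt$. Moreover $\mathbb E X_sX_t=0$ whenever $|s-t|>m$ by $m$-dependence and mean zero, so $g(s)=0$ for $|s|>m$ and the integral $\int_{-m}^m g(s)\,ds$ is the relevant quantity; its nonnegativity will drop out as the limit of the nonnegative quantities $\frac1{2T}\mathrm{Var}(\int_{-T}^T X_t\,dt)$.

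For the variance computation, change variables $u=s-t$, $v=t$ to write $\frac1{2T}\int_{-T}^T\int_{-T}^T\mathbb E X_sX_t\,ds\,dt=\frac1{2T}\int_{-m}^m\!\Big(\int_{A_u(T)}\mathbb E X_{v+u}X_v\,dv\Big)du$ where $A_u(T)\subseteq[-T,T]$ has length $2T-|u|$; the inner average converges to $g(u)$ by the mean-value property, the integrand is bounded by $M$ uniformly, and the outer integral is over the fixed compact set $[-m,m]$, so dominated convergence gives the claim. The one genuine subtlety here is justifying that $\frac1{2T}\int_{A_u(T)}\mathbb E X_{v+u}X_v\,dv\to g(u)$ \emph{for each fixed} $u$ — which needs the mean of an almost periodic function to be insensitive to deleting an endpoint segment of length $|u|$ and to shifting the window, both standard facts about the mean value functional — and then that this convergence may be integrated in $u$, which is immediate from boundedness. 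With $g$ and the limiting variance $\sigma^2:=\int_{-m}^m g(s)\,ds\ge 0$ in hand, the array CLT for $m$-dependent sequences yields $\frac1{\sqrt{2n}}\sum_{k=-n}^{n-1}Y_k\xrightarrow{d}N(0,\sigma^2)$ (the case $\sigma^2=0$ meaning convergence to the constant $0$), and transferring back through the negligible end pieces gives \eqref{convCLT}.

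I expect step (iii), and specifically the interchange of the $T\to\infty$ limit with the $u$-integration together with the care needed in handling the window $A_u(T)$ via the mean-value property of almost periodic functions, to be the main obstacle; the $m$-dependence reduction and the verification of Lindeberg via Lemma~\ref{uniform} are comparatively routine.
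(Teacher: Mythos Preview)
Your proposal is correct and follows essentially the same route as the paper: establish almost periodic correlation via Proposition~\ref{apsProp}~d), use the mean-value property of almost periodic functions to obtain $g(s)$ and the limiting variance $\int_{-m}^m g(s)\,ds$ via dominated convergence over the compact window $[-m,m]$, discretize into unit blocks $Y_k=\int_k^{k+1}X_t\,dt$, and apply an $m$-dependent triangular-array CLT (the paper invokes Heinrich's version explicitly) with the Lindeberg condition supplied by Lemma~\ref{uniform}. Two small points where the paper is slightly more careful: it obtains measurability of $g$ via joint measurability of $X$ and Fubini rather than by asserting continuity of the pre-limit averages in $s$ (which is not immediate from the hypotheses), and it treats the degenerate case $\int_{-m}^m g(s)\,ds=0$ separately by observing direct $L^2$-convergence to $0$ before normalizing by $V_T$.
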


\begin{proof}
Let the process $X$ be defined on the probability space $(\Omega, \mathcal{F}, P)$.\,By Proposition \ref{apsProp} d), $X$ is almost periodically correlated, so that $t\mapsto \mathbb{E}X_{s+t}X_t$ is an almost periodic function for each $s\in\R$. By the mean-value property of almost periodic functions (e.g. [\ref{Levitan}, Section 2.3]), the limit $g(s):= \lim_{T\to\infty} \frac{1}{2T} \int_{-T}^T \mathbb{E}X_{s+t}X_t\,dt$ exists in $\R$ for each $s\in\R$. That $g$ is bounded follows from the $L^2$-uniform integrability of $X$ and the Cauchy-Schwarz inequality.\,To see that $g$ is measurable, observe that the functions $\R^2\times \Omega\to \R$, $(s,t,\omega)\mapsto X_s(\omega)$ and $\R^2\times \Omega\to\R$, $(s,t,\omega)\mapsto X_t(\omega)$ are $\mathcal{B}(\R^2)\otimes \mathcal{F}$-measurable by joint measurability of $X$, hence so is $(s,t,\omega)\mapsto X_t(\omega)X_s(\omega)$. Fubini's theorem implies measurability of $\R^2 \ni (s,t)\mapsto \mathbb{E}X_tX_s$ (as well as the almost sure existence of pathwise integrals like $\int_{-T}^T \int_{-T}^T X_t(\omega) X_s(\omega) \,dt\,ds$), hence $\R\ni s\mapsto \frac{1}{2T} \int_{-T}^T \mathbb{E}X_{s+t}X_t\,dt$ is measurable for each $T>0$, hence so is $g$ as a limit of measurable functions. For $T>0$ denote 
\begin{align*}
    V_T:= \left( \mathbb{E}\left( \frac{1}{\sqrt{2T}} \int\limits_{-T}^T X_t\,dt \right)^2 \right)^{1/2} \ge 0.
\end{align*}
As $(X_t)_{t\in\R}$ is $m$-dependent, we obtain for $T>m$
\begin{align*}
    V_T^2 = \frac{1}{2T} \int\limits_{-T}^T \int\limits_{-T}^T \mathbb{E} X_tX_s\,dt\,ds = \frac{1}{2T} \int\limits_{-m}^m \int\limits_{-T}^T \mathbb{E}X_tX_{t+s} \mathds{1}_{|t+s|\le T} \,dt\,ds.
\end{align*}
Since 
\begin{align*}
    \lim\limits_{T\to\infty} \frac{1}{2T} \int\limits_{-m}^m \int\limits_{-T}^T \mathbb{E}(X_tX_{t+s})(1-\mathds{1}_{|t+s|\le T})\,dt\,ds =0
\end{align*}
an application of Lebesgue's dominated convergence theorem shows
\begin{align*}
    \lim\limits_{T\to\infty} V_T^2=\lim\limits_{T\to\infty} \frac{1}{2T} \int\limits_{-m}^m \int\limits_{-T}^T \mathbb{E}X_{s+t}X_t\,dt\,ds = \int\limits_{-m}^m g(s)\,ds=:V^2_{\infty},
\end{align*}
so that $V_{\infty}^2= \int_{-m}^m g(s)\,ds \ge 0$.\\
To show (\ref{convCLT}), we distinguish the cases $V_{\infty}^2 = 0$ and $V_{\infty}^2>0$. If $V_{\infty}^2=0$, then the above calculations show that $\frac{1}{\sqrt{2T}} \int_{-T}^T X_t\,dt$ converges in $L^2$ to $0$, hence in distribution. Thus, we may assume $V_{\infty}^2>0$. Then (\ref{convCLT}) is equivalent to
\begin{align}
    \frac{1}{V_T} \frac{1}{ \sqrt{2T} } \int\limits_{-T}^T X_t\,dt \overset{d}{\to} N( 0,1),\quad T\to\infty.\label{convCLT2} 
\end{align}
Since $\frac{1}{V_T} \frac{1}{ \sqrt{2T} } \left(\int_{\lfloor T\rfloor}^T X_t\,dt + \int_{-T}^{-\lfloor T\rfloor} X_t\,dt \right)$
converges in probability to $0$ as $T\to\infty$ by uniform integrability (here, $\lfloor x\rfloor$ denotes the integer part of $x\in\R$), it is clearly enough to prove (\ref{convCLT2}) where $T$ is restricted to the natural numbers. For that, we will apply a result of Heinrich \cite[Theorem 2, p.~135]{Heinrich}. Denote (for $n\in\N$ large enough such that $V_n>0$)
\begin{align*}
    W_n &:= \{ -n,-n+1,\dotso,n-1 \}\subset \Z,\\
    Y_k &:= \int\limits_{k}^{k+1} X_t\,dt,\,k\in\Z,\\
    U_{n,k}&:=\frac{1}{V_n} \frac{1}{ \sqrt{2n} }Y_k,\, k\in W_n \quad \textrm{ and }\\
    S_n&:= \sum\limits_{k\in W_n} U_{n,k}.
\end{align*}
Then $|W_n|\to\infty$, $\mathbb{E} U_{n,k} =0$, and $(U_{n,k})_{k\in W_n}$ is $m_n$-dependent with $m_n:=\lfloor m\rfloor+2$ (independent of $n$).\,Further, $S_n = \frac{1}{V_n} \frac{1}{ \sqrt{2n} } \int_{-n}^n X_t\,dt$ satisfies $\mathbb{E}S_n^2 = 1$ by definition of $V_n$. Next, since $(Y_k)_{k\in\Z}$ is $L^2$-uniformly integrable by Lemma \ref{uniform}, we have
\begin{align*}
    \sup\limits_{n\in\N} \sum\limits_{k\in W_n} \mathbb{E}U^2_{n,k} \le \sup\limits_{n\in\N} \left( \frac{1}{V_n^2}\frac{1}{2n} 2n \right) \sup\limits_{k\in\N} \mathbb{E}Y_k^2 <\infty
\end{align*}
and
\begin{align*}
    \sum\limits_{k\in W_n} \mathbb{E}\left( U^2_{n,k}\mathds{1}_{|U_{n,k}|\ge \varepsilon} \right) &= \frac{1}{V_n^2 2n} \sum\limits_{k=-n}^{n-1} \mathbb{E}\left( Y_k^2 \mathds{1}_{|Y_k|\ge \varepsilon V_n\sqrt{2n}} \right)\\
    &\le \frac{1}{V_n^2}\sup\limits_{k\in\Z} \mathbb{E}\left( Y_k^2 \mathds{1}_{|Y_k|\ge \varepsilon V_n\sqrt{2n}} \right)\to 0 
\end{align*}
as $k\to\infty$ for every $\varepsilon>0$. Hence, all assumptions of Theorem 2 in [\ref{Heinrich}] are satisfied, yielding 
\begin{align*}
    \frac{1}{V_n} \frac{1}{\sqrt{2n}} \int\limits_{-n}^n X_t\,dt = S_n \overset{d}{\to} N( 0,1), \textrm{ as }n\to\infty,
\end{align*}
i.e. (\ref{convCLT2}).
\end{proof}

\section*{Acknowledgement:} The first author is financially supported through the DFG-NCN Beethoven Classic 3 project SCHI419/11-1 \& NCN
2018/31/G/ST1/02252.\,The two authors would like to thank Alexander Lindner for his support, helpful comments and for many interesting and fruitful discussions, improving the content and the exposition of the paper immensely. Furthermore, the authors would like to thank Ren\'e L. Schilling for his valuable comments, which improved the presentation of the paper.

\mbox{}\\
\,\\
David Berger\\
 TU Dresden, Institute of Mathematical Stochastics, Zellescher Weg 12-14, 01069 Dresden,
Germany\\
email: david.berger2@tu-dresden.de\\\\
Farid Mohamed\\
Ulm University, Institute of Mathematical Finance,  Helmholtzstra{\ss}e 18, 89081 Ulm,
Germany\\
email: farid.mohamed@uni-ulm.de
\end{document}